\newtheorem{theorem}{\sc Theorem}[section]
\newtheorem{lemma}[theorem]{\sc Lemma}
\newtheorem{proposition}[theorem]{\sc Proposition}
\newtheorem{Index Convention}{Index Convention}
\begin{document}
\title[On the exponent of a finite group]{On the exponent of a finite group admitting a fixed-point-free four-group of automorphisms}
\keywords{automorphisms, Lie algebras, finite groups}
\subjclass{20D45,17B70}

\author[E. Romano]{Emanuela Romano}
\address[E. Romano]{Dipartimento di Matematica e Informatica\\
  Universit\`a di Salerno\\
  Via Ponte don Melillo\\
  84084 Fisciano (SA) \\  Italy}
\email{eromano@unisa.it}

\author[P. Shumyatsky]{Pavel Shumyatsky}
\address[P. Shumyatsky]{Department of Mathematics\\
  Universidade de Brasilia\\
  70.919 Brasilia - DF\\
  Brazil}
\email{pavel@unb.br}

\thanks{The second author was supported by CNPq-Brazil}

\begin{abstract} Let $A$ be a group isomorphic with either $S_4$, the symmetric group on four symbols, or $D_8$, the dihedral group of order 8. Let $V$ be a normal four-subgroup of $A$ and $\alpha$ an involution in $A\setminus V$. Suppose that $A$ acts on a finite group $G$ in such a manner that $C_G(V)=1$ and $C_G(\alpha)$ has exponent $e$. We show that if $A\cong S_4$ then the exponent of $G$ is $e$-bounded and if $A\cong D_8$ then the exponent of the derived group $G'$ is $e$-bounded. This work was motivated by recent results on the exponent of a finite group admitting an action by a Frobenius group of automorphisms.
\end{abstract}
\maketitle

\section{Introduction}
Let $G$ be a group admitting an action of a group $A$. We
denote by $C_G(A)$ the set $C_G(A)=\{x\in G; x^a=x\mbox{ for any } a\in A\}$,
the centralizer of $A$ in $G$ (the fixed-point group). In many cases the properties of $C_G(A)$ have influence over those of $G$. In particular, it was discovered in the late 90s that the exponent of $C_G(A)$ may have strong impact over the exponent of $G$ \cite{khushu}. Special attention was recently given to the situation where a Frobenius group acts by automorphisms on another group. Recall that a Frobenius group $FH$ with kernel $F$ and complement $H$ can be characterized as a finite group that is a semidirect product of a normal subgroup $F$ by $H$ such that $C_F(h)=1$ for every $h\in H\setminus\{1\}$. By Thompson's theorem \cite{tho} the kernel $F$ is nilpotent, and by Higman's theorem \cite{hi} the nilpotency class of $F$ is bounded in terms of the least prime divisor of $|H|$ (explicit upper bounds for the nilpotency class are due to Kreknin and Kostrikin \cite{kr,kr-ko}). Suppose that the Frobenius group $FH$ acts on a finite group $G$ in such a way that $C_G(F)=1$. It was shown in \cite{khmashu} that in this case the order and rank of $G$ are bounded in terms of $|H|$ and the order and rank of $C_G(H)$, respectively. Further, it was shown that if $F$ is cyclic, then the nilpotency class of $G$ is bounded in terms of $|H|$ and the nilpotency class of $C_G(H)$. In the case when $GF$ is also a Frobenius group with kernel $G$ and complement $F$ (so that $GFH$ is a double Frobenius group) the latter result was obtained earlier in \cite{mash}. This solved in the affirmative Mazurov's problem 17.72(a) in Kourovka Notebook \cite{kour}.

The other problem of Mazurov about double Frobenius groups -- Problem 17.72(b) in Kourovka Notebook -- is whether in a double Frobenius group $GFH$ the exponent of $G$ is bounded in terms of $|H|$ and the exponent of $C_G(H)$ only. That problem seems to be very hard and so far no viable approach to it has been found. We will quote just one result from \cite{khmashu} that indirectly addresses the problem:

\begin{theorem}\label{cikl} Suppose that a Frobenius group
$FH$ with cyclic kernel $F$ and complement $H$ acts on a finite group $G$ in
such a manner that $C_G(F)=1$ and $C_G(H)$ has exponent $e$. Then
the exponent of $G$ is bounded solely in terms of $e$ and $|FH|$.
\end{theorem}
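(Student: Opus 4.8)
The plan is to reduce the statement to an assertion about an associated Lie algebra, run that through Zelmanov's theorem, and treat the exponent of the abelianization separately. First I would reduce to the case where $G$ is a finite $p$-group. The $p$-part of $\exp(G)$ is the exponent of a Sylow $p$-subgroup, and a bound of the required shape for every prime yields a bound for $\exp(G)$, since only primes below that bound can occur. For a prime $p$ coprime to $|FH|$ an $FH$-invariant Sylow $p$-subgroup $P$ exists, and the hypotheses descend: $C_P(F)=C_G(F)\cap P=1$ and $C_P(H)=C_G(H)\cap P$ has exponent dividing $e$; the finitely many primes dividing $|FH|$ I would dispose of separately. Thus I am reduced to a finite $p$-group $P$ with $p\nmid|FH|$, on which $FH$ acts with $C_P(F)=1$ and $\exp C_P(H)\mid e$, and I must bound $\exp P$.

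Two quantities then suffice. The first is the exponent of $P^{\mathrm{ab}}=P/P'$: working additively, coprimeness splits $P^{\mathrm{ab}}$ (over a suitable extension) into $F$-eigencomponents $P_j$ with $P_0=C_{P^{\mathrm{ab}}}(F)=0$, which $H$ permutes freely because $FH$ is Frobenius; for $a\in P_j$ the orbit sum $a+a^h+\cdots+a^{h^{q-1}}$ lies in $C_{P^{\mathrm{ab}}}(H)$ and is killed by $e$, and since the summands sit in distinct components this forces $ea=0$, whence $\exp P^{\mathrm{ab}}\mid e$. The second is the nilpotency class $c$ of $P$. Given these, the standard estimate $\exp\bigl(\gamma_i(P)/\gamma_{i+1}(P)\bigr)\mid\exp P^{\mathrm{ab}}$ gives $\exp P\mid(\exp P^{\mathrm{ab}})^{c}\mid e^{c}$, the desired bound; note that bounding $c$ alone cannot bound $\exp P$ (abelian groups show this), so the separate input $\exp P^{\mathrm{ab}}\mid e$ is essential.

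It remains to bound $c$, and here I pass to the associated restricted Lie algebra $L=L_p(P)$ over $\mathbb{F}_p$, on which $FH$ acts and whose nilpotency class bounds that of $P$. The inputs are twofold. Because $C_P(F)=1$ and the action is coprime, $C_L(F)=0$, so the $F$-eigenspace decomposition makes $L$ a $(\mathbb{Z}/|F|)$-graded Lie algebra with trivial null-component; by Kreknin's theorem $L$ is then solvable of $|F|$-bounded derived length, hence satisfies a fixed polynomial identity. And because $\exp C_P(H)\mid e$, each homogeneous $x\in C_L(H)=L_p(C_P(H))$ satisfies $x^{[p^k]}=0$ (where $e=p^k$), so the restricted identity $(\mathrm{ad}\,x)^p=\mathrm{ad}(x^{[p]})$ gives $(\mathrm{ad}\,x)^{e}=0$; these elements are ad-nilpotent of index at most $e$. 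The engine is a Lie-theoretic statement: a $(\mathbb{Z}/n)$-graded PI Lie algebra with trivial null-component, carrying the complement $H$ permuting the components freely and whose homogeneous $H$-fixed elements are ad-nilpotent of bounded index, is nilpotent of $\{n,|H|,e\}$-bounded class. I would prove this by combinatorial grading lemmas that, forming orbit sums as above and exploiting the distinctness of the grading shifts, transfer ad-nilpotency from $C_L(H)$ to all homogeneous elements of $L$, and then by Zelmanov's theorem that a PI Lie algebra generated by ad-nilpotent elements of bounded index is nilpotent.

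The main obstacle is exactly this last step. Since we control only the exponent, not the rank, of $C_G(H)$, the rank of $P$ and hence the number of generators of $L$ are unbounded, so Zelmanov's theorem cannot be applied to $L$ in one stroke; the grading with trivial null-component and with only $|F|-1$ components must be exploited to reduce the nilpotency of $L$ to that of boundedly generated subalgebras in a way that yields a class bound independent of the rank. Carrying out this graded reduction, together with the transfer of ad-nilpotency across the $H$-orbits, is the delicate heart of the argument; by contrast the reductions of the first paragraph and the elementary estimates of the second are comparatively routine.
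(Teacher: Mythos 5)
First, a point of reference: the paper does not prove Theorem~\ref{cikl} at all --- it is quoted from \cite{khmashu}, and the text only records that the proof ``uses Lazard's Lie algebra associated with the Jennings--Zassenhaus filtration and its connection with powerful $p$-groups.'' Your reduction to $p$-groups, your eigencomponent computation of $\exp(P/P')$, and your use of Kreknin's theorem together with ad-nilpotency of homogeneous elements of $C_L(H)$ coming from Lemma~\ref{3.6} are all in the spirit of that proof. The gap is in your endgame. You propose to bound the nilpotency class $c$ of $P$ and then conclude $\exp P\mid e^{c}$. This fails twice over. First, the nilpotency class of $L_p(P)$ does not bound the class of $P$: $L_p(P)$ is only the subalgebra of the Jennings--Zassenhaus algebra generated by $D_1/D_2$, and its nilpotency of class $c$ yields, via Lemma~\ref{4.9}, a powerful characteristic subgroup of $(p,c,d)$-bounded \emph{index} --- not a class bound for $P$. (The graded Lie ring of the lower central series would control the class, but Lazard's formula $({\rm ad}\,\bar x)^p={\rm ad}\,\overline{x^p}$, which is the only place the exponent hypothesis enters, is not available there.) Second, and decisively, no bound on the class of $P$ in terms of $e$ and $|FH|$ can exist: $C_P(H)$ is a subgroup of $P$, and a finite group of exponent $e$ may have arbitrarily large nilpotency class, so the quantity you are trying to bound is genuinely unbounded under the hypotheses. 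Any argument routed through ``bounded class times bounded abelianization exponent'' is therefore doomed, not merely incomplete.

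The repair is exactly the ``connection with powerful $p$-groups'' the paper alludes to, and it is visible in the proof of Theorem~\ref{main3} in Section 4. One first reduces to bounding the order of a single element $x$ by passing to the $FH$-invariant subgroup $\langle x^{FH}\rangle$, which is generated by at most $|FH|$ elements; this simultaneously removes the unbounded-rank obstacle you correctly identify for the Zelmanov step and makes the index in Lemma~\ref{4.9} bounded. Having shown that $L_p(P)$ is nilpotent of bounded class for this boundedly generated $P$, one obtains a powerful characteristic subgroup $Q$ of $e$- and $|FH|$-bounded index. Since $C_Q(F)=1$ forces $Q=\langle C_Q(H)^f\mid f\in F\rangle$, one may choose generators $g_1,\dots,g_d$ of $Q$ lying in conjugates of $C_Q(H)$, so each satisfies $g_i^{e}=1$; the powerful-group identity $Q^{e}=\langle g_1^{e},\dots,g_d^{e}\rangle$ then gives $Q^{e}=1$ outright, and $\exp P$ is bounded by $e$ times the index of $Q$. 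Your abelianization computation, while correct, is then not needed. The remaining loose end in your write-up --- the treatment of primes dividing $|FH|$, where no invariant Sylow subgroup is available --- is also left unaddressed (``dispose of separately''), but it is a secondary issue next to the structural one above.
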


Since the exponent of $G$ here depends on the order of $F$, the above theorem does not yield an answer to Mazurov's problem. The proof of Theorem \ref{cikl} uses Lazard's Lie algebra associated with the Jennings--Zassenhaus filtration and its connection with powerful $p$-groups.

It is natural to ask if the theorem remains valid without assuming that $F$ is cyclic. The case of the smallest Frobenius group whose kernel is non-cyclic was treated in \cite{anovo}. The group in question is of course the non-abelian group of order 12 also known as the Alternating group $A_4$ of degree 4. The following theorem is the main result of \cite{anovo}.

\begin{theorem}\label{main} Suppose that the Frobenius group
$FH$ of order 12 acts coprimely on a finite group $G$ in
such a manner that $C_G(F)=1$ and $C_G(H)$ has exponent $e$. Then
the exponent of $G$ is bounded in terms of $e$ only.
\end{theorem}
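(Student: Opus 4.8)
The plan is to reduce to a $p$-group, replace it by its associated Lie ring, exploit the rigid grading coming from the fixed-point-free four-group, and close with Zelmanov's theorem and the theory of powerful $p$-groups. First I would use coprimality: since $|FH|=12$, the action being coprime forces $\gcd(|G|,6)=1$, so $G$ has odd order and $FH$ leaves invariant a Sylow $p$-subgroup $P$ for every prime $p$. As the $p$-part of $\exp(G)$ equals $\exp(P)$, it suffices to bound $\exp(P)$ by a function of $e$ and to see that only $e$-boundedly many primes occur. If $p\nmid e$ then $C_P(H)\le C_G(H)$ is a $p$-group of exponent dividing $e$, hence trivial, and the theorem of \cite{khmashu} bounding $|G|$ in terms of $|H|$ and $|C_G(H)|$ forces $|P|$, and therefore $p$, to be bounded by an absolute constant; these primes are finite in number and contribute a bounded exponent. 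Hence I may assume $G=P$ is an $FH$-invariant $p$-group with $p\mid e$ (so $p\ge5$), $C_P(V)=1$, and $\exp(C_P(H))\mid e$.

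Next I would extract the structure forced by the four-group $V=F=\{1,\alpha,\beta,\gamma\}$. Coprime fixed-point-free action of a four-group gives $P=C_P(\alpha)C_P(\beta)C_P(\gamma)$, and $H$ permutes the three factors cyclically. Each factor is abelian: inside $C_P(\alpha)$ the involution $\beta$ satisfies $C_{C_P(\alpha)}(\beta)=C_P(V)=1$, so it acts fixed-point-freely, and a finite group with a fixed-point-free automorphism of order two is abelian and inverted by it. Passing to the Lazard Lie ring $L=L_p(P)$ of the Jennings--Zassenhaus filtration, which is generated by its first homogeneous component and carries the lifted coprime action with $C_L(V)=0$, and extending scalars to diagonalize $V$, I obtain a grading $L=L_1\oplus L_2\oplus L_3$ by the three nontrivial characters of $V$. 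The trivial component is $C_L(V)=0$, so each $L_i$ is abelian and $[L_i,L_j]\subseteq L_k$ for $\{i,j,k\}=\{1,2,3\}$, while $h$ cyclically permutes the $L_i$ and $C_L(H)$ is the image of $L_1$ under the trace $x\mapsto x+x^h+x^{h^2}$.

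The exponent hypothesis now enters in Lie-theoretic form. From $\exp(C_P(H))\mid e$ a Lazard-type computation shows that the elements of $C_L(H)$ are ad-nilpotent of $e$-bounded index, and, using the three-component grading to transfer this from the diagonal $C_L(H)$ to the homogeneous components, that every homogeneous element of $L$ is ad-nilpotent of $e$-bounded index; this yields an Engel-type polynomial identity of $e$-bounded degree. To apply Zelmanov's theorem I must bound the number of generators, so, since it is enough to bound the order of an arbitrary $x\in P$, I replace $P$ by the $A$-invariant subgroup $Q=\langle x^A\rangle$, generated by at most $12$ elements and inheriting all hypotheses, so that $L=L_p(Q)$ is boundedly generated. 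Zelmanov's theorem -- a polynomial-identity Lie algebra generated by finitely many ad-nilpotent elements of bounded index is nilpotent of bounded class -- then gives that $L$ is nilpotent of $e$-bounded class, whence by the Lazard correspondence $Q$ is nilpotent of $e$-bounded class. Finally, since $Q$ is boundedly generated, the theory of powerful $p$-groups converts bounded class into a bound on exponent: $Q$ has a powerful characteristic subgroup of $e$-bounded index in which $p^i$-th powers are controlled by those of the generators, bounding the order of $x$, and hence $\exp(P)$, in terms of $e$.

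The hard part will be the two places where the single datum $\exp(C_P(H))=e$ must be made to do global work. The first is the Lie-algebra step: deducing that every homogeneous element of $L$ is ad-nilpotent of bounded index from the hypothesis on $C_L(H)$ alone, which is exactly where the rigidity of the four-group grading and the order-three symmetry are indispensable. The second is the passage back to the group, since nilpotency of bounded class together with bounded generation does not by itself bound the exponent -- an abelian group shows this -- so the exponent information must be reinjected through powerful $p$-groups while avoiding circularity, the generators $x^a$ of $Q$ having precisely the order one is trying to bound.
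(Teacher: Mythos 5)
First, a point of orientation: the paper does not prove Theorem \ref{main} at all; it is quoted as the main result of \cite{anovo}, so there is no ``paper's own proof'' here to match step by step. Your skeleton (reduction to $FH$-invariant Sylow subgroups, disposal of primes not dividing $e$ via \cite{khmashu}, the Lazard Lie algebra with the $V$-grading $L=L_1\oplus L_2\oplus L_3$, $L_0=0$, and a final descent through powerful $p$-groups) is indeed the architecture common to \cite{anovo} and to the proofs in Sections 4--5 of this paper. But the two steps you yourself flag as ``the hard part'' are not gaps in exposition; they are the entire content of the theorem, and the first of them is asserted in a form that is not known to be true and is not how the actual argument goes.

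Concretely: you claim that ad-nilpotency of the elements of $C_L(H)$ can be ``transferred'' via the grading to conclude that \emph{every} homogeneous element of $L$ is ad-nilpotent of $e$-bounded index, after which Zelmanov's PI-plus-ad-nilpotent-generators theorem finishes the Lie-theoretic part. No such transfer is available. If $x_1\in L_1$, the element $x_1+x_1^h+x_1^{h^2}$ lies in $C_L(H)$ and is ad-nilpotent of bounded index by Lazard's lemma, but its summands live in different components and one cannot extract ad-nilpotency of $x_1$ from that of the sum except in very special circumstances (compare the proof of Proposition \ref{lie}, where the components $z_1,z_2$ of an element of $L_\alpha$ are shown ad-nilpotent only \emph{after} an ideal has been factored out to force them to commute, so that $2$-invertibility applies). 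This is precisely why the paper stresses that the order-$12$ case is harder than Theorem \ref{cikl} and is complicated by Lie algebras of unbounded derived length: the actual criterion (the analogue of Proposition \ref{lie}) assumes ad-nilpotency only for homogeneous elements of the fixed subalgebra, adds the separate hypothesis -- extracted from the restricted Burnside problem -- that pairs of such elements generate subalgebras of bounded class, and then runs a delicate induction on critical elements and invariant ideals. Your proposal replaces that induction by an unproved claim. A smaller but real second gap is the endgame: as you note, bounded class plus bounded generation does not bound the exponent, and the resolution is not to control powers of the generators $x^a$ but to re-choose the generators of the powerful subgroup $T$ inside conjugates of $C_T(H)$ (using $G=\langle C_G(H)^f : f\in F\rangle$, valid since $C_G(F)=1$), so that $T^e=\langle t_1^e,\dots,t_d^e\rangle=1$; this step is stated as a difficulty in your write-up rather than carried out.
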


Recall that an action of a finite group $A$ on a finite group $G$ is coprime if $(|G|,|A|)=1$.
It is amazing that tools required for the treatment of the situation where $FH$ has order 12 are more sophisticated than those employed in the proof of Theorem \ref{cikl}. In particular, the proof of Theorem \ref{main} uses in the very essential way the solution of the Restricted Burnside Problem \cite{ze1,ze2} while the proof of Theorem \ref{cikl} is based on more simple techniques. Some explanation of this phenomenon can be found in the study of automorphisms of Lie algebras. By the Kreknin Theorem \cite{kr} a Lie algebra that admits a fixed-point-free automorphism of finite order $n$ is soluble with $n$-bounded derived length. On the other hand, a Lie algebra that admits a non-cyclic fixed-point-free group of automorphisms can be unsoluble or soluble with arbitrarily large derived length (see examples in \cite[p. 149--150]{khu}). It is the necessity to work with Lie algebras of unbounded derived length that accounts for the complexity of the proof of Theorem \ref{main}.

The present paper is a natural continuation of \cite{anovo}. Here we study in more detail questions about the exponent of a finite group admitting a fixed-point-free four-group of automorphisms. In the first result that we would like to mention we consider groups acted on by $S_4$, the symmetric group on 4 symbols. In what follows we denote by $V$ the maximal normal 2-subgroup of $S_4$. Of course $V$ is the non-cyclic group of order 4.

\begin{theorem}\label{main1} Let $A$ be isomorphic with $S_4$ and let $\alpha$ be an involution in $A\setminus V$. Suppose that $A$ acts on a finite group $G$ in such a manner that $C_G(V)=1$ and $C_G(\alpha)$ has exponent $e$. Then the exponent of $G$ is bounded in terms of $e$ only.
\end{theorem}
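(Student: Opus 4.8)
The plan is to adapt the Lie-theoretic strategy of \cite{anovo} to the group $S_4$, the ultimate engine again being Zelmanov's solution of the Restricted Burnside Problem \cite{ze1,ze2}. I would start with two reductions. First, $G$ has odd order: form the semidirect product $\Gamma$ of $G$ by $A$ and choose a Sylow $2$-subgroup $P$ of $\Gamma$ containing $V$; then $P\cap G$ is a Sylow $2$-subgroup of $G$ and is normal in $P$, so were it nontrivial it would meet $Z(P)$ in a nontrivial element $z\in G$ centralized by $V\le P$, against $C_G(V)=1$. Thus $V$ and $\alpha$ act coprimely. Writing $v_1,v_2,v_3$ for the involutions of $V$, coprimality gives $G=C_G(v_1)C_G(v_2)C_G(v_3)$, and on the odd-order group $C_G(v_i)$ each of the other two involutions of $V$ acts without nontrivial fixed points; a fixed-point-free involution inverts its host, so every $C_G(v_i)$ is \emph{abelian}. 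Since the $C_G(v_i)$ are nilpotent and $G$ is soluble, a theorem of Ward on four-groups of automorphisms with nilpotent fixed points shows that $G$ is nilpotent, whence $G=\prod_p G_p$ with each Sylow subgroup $G_p$ characteristic and $A$-invariant.

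The second reduction is that every prime divisor of $|G|$ divides $e$. If $p$ does not divide $e$, then the $p$-part of $C_G(\alpha)$ is trivial, so $C_{G_p}(\alpha)=1$ and the coprime involution $\alpha$ inverts the (now abelian) group $G_p$. Decomposing $G_p$ under $V$ into the components $C_{G_p}(v_i)$ and using that $\alpha$ fixes one $v_i$ and interchanges the other two, fixed-point-freeness of $\alpha$ forces two of the three components to vanish, and transitivity of the permutation of $v_1,v_2,v_3$ by a $3$-cycle of $A$ then kills the third; hence $G_p=1$. Consequently $|G|$ involves only primes dividing $e$ --- in particular boundedly many, each at most $e$ --- and it suffices to bound $\exp(G_p)$ for each $p\mid e$ by a function of $e$; since $p\le e$, bounds involving $p$ are permissible.

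Fix such a $p$ and form the associated restricted Lie algebra $L=L_p(G_p)$ over the field of $p$ elements, attached to the Zassenhaus--Jennings--Lazard series, on which $A$ acts. The coprime action of $V$ yields a $V$-grading $L=L_1\oplus L_2\oplus L_3$ with $L_i=C_L(v_i)$ and $C_L(V)=0$, where each $L_i$ is an abelian subalgebra and $[L_i,L_j]\subseteq L_k$ for $\{i,j,k\}=\{1,2,3\}$. Because all transpositions are $A$-conjugate to $\alpha$, the group $C_G(\beta)$ has exponent $e$ for every transposition $\beta$, and by the standard lemma relating the order of an element to the nilpotency of its adjoint map \cite{khushu} every element of $C_L(\beta)$ is ad-nilpotent of $e$-bounded index. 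The decisive point is that $L$ is \emph{generated by} ad-nilpotent elements of $e$-bounded index, and here the transpositions that \emph{move} the $v_i$ are essential. For instance $\alpha=(12)$ interchanges $v_2$ and $v_3$, hence interchanges $L_2$ and $L_3$, so for $x\in L_2$ both $x+x^\alpha\in C_L(\alpha)$ and $x-x^\alpha\in C_L(\alpha v_1)$ are ad-nilpotent (with $\alpha v_1$ again a transposition) and together they span $L_2\oplus L_3$; a transposition interchanging $v_1$ and $v_3$ spans $L_1\oplus L_3$ in the same way. Thus all of $L$ is spanned by ad-nilpotent elements of $e$-bounded index. This manoeuvre precisely dissolves the components on which every transposition fixing a given $v_i$ acts by inversion --- the obstruction that, having no counterpart inside $\langle V,\alpha\rangle\cong D_8$, is presumably what confines the $D_8$ version of the theorem to $G'$.

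The heart of the argument is to upgrade ``$L$ is generated by ad-nilpotent elements of bounded index'' to ``$L$ is nilpotent of class bounded in terms of $e$ alone''. This is where the difficulty announced in the introduction is concentrated: the fixed-point-free four-group $V$ imposes no bound on the derived length of $L$, so Kreknin's theorem \cite{kr} is unavailable, and one must instead secure a polynomial identity for $L$ and invoke Zelmanov's methods \cite{ze1,ze2}, in the quantitative form developed in \cite{khushu,anovo}, to obtain nilpotency with class independent of both the number of generators and $p$. Granting a class bound $c=c(e)$, the fact that $L$ is generated in degree $1$ gives $D_{c+1}(G_p)=1$; since the $p$-th power map multiplies the Zassenhaus--Jennings--Lazard degree by $p$, every element of $G_p$ is annihilated by $p^m$ for the least $m$ with $p^m>c$, so $\exp(G_p)\le p^{\lceil\log_p(c+1)\rceil}\le p(c+1)$, which is $e$-bounded because $p\le e$. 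Taking the least common multiple over the boundedly many primes $p\mid e$ bounds $\exp(G)$ in terms of $e$. The main obstacle is exactly this passage through Zelmanov's theorem: establishing the requisite polynomial identity and extracting a nilpotency class uniform in the number of generators and in $p$ is the technical core, and it is here that the solution of the Restricted Burnside Problem is indispensable.
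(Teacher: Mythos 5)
Your outline has three genuine gaps, and the first is fatal to its whole architecture. You assert, via ``a theorem of Ward'', that $G$ is nilpotent because the $C_G(v_i)$ are abelian and $G$ is soluble; this is false. A finite (odd-order) group admitting a fixed-point-free four-group of automorphisms need not be nilpotent --- only its derived subgroup is (Gorenstein, Theorem 10.5.3, which is exactly what the paper invokes). A concrete counterexample: let $P=\langle a\rangle\times\langle b\rangle\cong C_7\times C_7$, let $c$ have order $3$ and act by $a\mapsto a^3b$, $b\mapsto ab^3$, and let $v_1,v_2$ act on $P$ by $\mathrm{diag}(1,-1)$ and $\mathrm{diag}(-1,1)$ while inverting $c$; then $V$ acts on the non-nilpotent group $G=P\rtimes\langle c\rangle$ with $C_G(V)=1$. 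Without nilpotency there is no $A$-invariant Sylow decomposition $G=\prod_pG_p$, so both of your reductions (to primes dividing $e$, and to a single Sylow subgroup) collapse. The paper instead runs the Lie-algebra argument inside nilpotent ($p$-)groups and then handles the general case through the metabelian quotient $G/G''$, using $K^e\le G_3\le Z(G)$ and Schur's theorem.

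Second, the step you yourself call the heart of the argument --- upgrading ``$L$ is spanned by ad-nilpotent elements of $e$-bounded index'' to ``$L$ is nilpotent of $e$-bounded class'' --- is not a routine appeal to Zelmanov. His machinery requires ad-nilpotency of all commutators in the generators (plus a polynomial identity), and the commutators of your spanning elements $x\pm x^{\alpha}$ do not lie in any $C_L(\beta)$, so no index bound is available for them; supplying a substitute is precisely the content of Proposition \ref{lie}, which occupies all of Section 2 and additionally needs the two-generator reduction $L=\langle x_1,x_1^{\alpha}\rangle$ and the hypothesis that pairs of homogeneous elements of $L_\alpha$ generate subalgebras of bounded class. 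Third, your closing deduction is wrong: nilpotency of $L_p(G_p)$ of class $c$ does not give $D_{c+1}(G_p)=1$ (a cyclic group of order $p^n$ has one-dimensional abelian $L_p$ but exponent $p^n$), so no exponent bound follows from the class bound alone; the correct route is Lemma \ref{4.9} (a powerful subgroup of bounded index) together with generators of $e$-bounded order, which again forces the $H^e\le G_3\le Z(G)$ argument. For comparison, the paper's own proof of Theorem \ref{main1} is a two-line corollary: the $D_8$-argument of Theorem \ref{main2} already bounds $\exp(C_G(v_1))$, the three involutions of $V$ are conjugate in $S_4$, and Theorem \ref{q2} then bounds $\exp(G)$.
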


Remark that the above result does not require the coprimeness assumption. It is interesting and somewhat unusual that Theorem \ref{main1} involves only a hypothesis on the exponent of $C_G(\alpha)$ rather than the centralizer of the subgroup of order three as in Theorem \ref{main}. It is well-known that the Sylow 2-subgroup of $S_4$ is isomorphic with $D_8$, the dihedral group   of order 8. When studying the action of $D_8$ on $G$ satisfying the conditions similar to the ones in Theorem \ref{main1} we discovered a new phenomenon -- such an action actually has  strong impact on the exponent of $G'$, the derived group of $G$. Write $D_8$ as a product $V\langle\alpha\rangle$, where $V$ is a four-group and $\alpha$ an involution.

\begin{theorem}\label{main2} Let $A$ be isomorphic with $D_8$ and suppose that $A$ acts on a finite group $G$ in such a manner that $C_G(V)=1$ and $C_G(\alpha)$ has exponent $e$. Then the exponent of $G'$ is bounded in terms of $e$ only.
\end{theorem}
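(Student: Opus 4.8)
The plan is to reduce to the case of a finite $p$-group and then pass to the associated Lie ring, which I would control by combining Kreknin's and Zelmanov's theorems. First I would carry out the standard reductions of \cite{khushu,khmashu,anovo} to concentrate on the case where $G$ is a finite $p$-group. Here the absence of a coprimeness hypothesis is less troublesome than it looks: since $V$ is a $2$-group and $C_{O_2(G)}(V)\le C_G(V)=1$, the classical fact that a nontrivial $p$-group of automorphisms of a nontrivial $p$-group has nontrivial fixed points forces $O_2(G)=1$. Consequently the genuinely non-coprime prime $p=2$ does not really occur, and in the essential case $p$ is odd, so that $A\cong D_8$ acts coprimely. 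Write $z$ for the central involution of $A$ (it lies in $V$), fix $r\in A$ of order $4$ with $r^2=z$, and note that $z\alpha$ is conjugate to $\alpha$ in $A$ via $r$; hence $C_G(z\alpha)$ is carried onto $C_G(\alpha)$ by the action of $r$ and also has exponent $e$.

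Next I would record the structure forced on $G$, and then on its Lie ring. Denoting the three involutions of $V$ by $v_1=z,v_2,v_3$, for each $v_i$ the remaining involution of $V$ acts fixed-point-freely on $C_G(v_i)$, so every $C_G(v_i)$ is abelian; in particular $C_G(z)$ is abelian, and $\alpha$ acts on it fixing $C_G(z)\cap C_G(\alpha)=C_G(W)$, where $W=\langle z,\alpha\rangle$. Passing to $L=L_p(G)$, on which $A$ acts, the hypothesis $C_G(V)=1$ gives $C_L(V)=0$, so the $V$-grading reads $L=C_L(v_1)\oplus C_L(v_2)\oplus C_L(v_3)$ with each summand abelian, $\alpha$ fixing $C_L(z)$ and interchanging the other two summands. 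Equivalently, the order-$4$ automorphism $r$ yields a $\mathbb{Z}_4$-grading of $L$ whose zero component is $C_L(r)\subseteq C_L(z)$, hence abelian.

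The crucial step is solubility, and this is exactly where the full group $D_8$, rather than $V$ alone, is used: by the examples quoted in the introduction a Klein four-grading with zero identity component need not be soluble, so the grading by $V$ is not enough. Instead I would invoke Kreknin's theorem \cite{kr} in the form that a $\mathbb{Z}_n$-graded Lie ring with nilpotent zero component is soluble of derived length bounded in terms of $n$ and the nilpotency class. Applied to the $\mathbb{Z}_4$-grading above, whose zero component $C_L(r)$ is abelian, this makes $L$ soluble of bounded derived length, so that $L$ satisfies a polynomial identity $\delta_{d}\equiv 0$. I regard securing this identity as the main obstacle: without it Zelmanov's machinery is unavailable, and it is precisely the extra order-$4$ element of $D_8$, with its abelian fixed-point subalgebra, that supplies it.

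Finally I would feed in the exponent hypothesis and conclude. Since $C_G(\alpha)$ and $C_G(z\alpha)$ have exponent $e$, the homogeneous elements of $C_L(\alpha)$ and $C_L(z\alpha)$ are ad-nilpotent of $e$-bounded index: working in the restricted Lie algebra, an element coming from $x$ with $x^{q}=1$, where $q\le e$ is a power of $p$, satisfies $(\operatorname{ad}\bar x)^{q}=\operatorname{ad}(\bar x^{[q]})=0$. These two subspaces together span everything except $C_L(r)$, so the subalgebra $K=\langle C_L(\alpha),C_L(z\alpha)\rangle$ they generate is an ideal with $L/K$ abelian, whence $L'=[L,L]\subseteq K$. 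Now $K$ is generated by ad-nilpotent elements of $e$-bounded index and satisfies the polynomial identity above, so by Zelmanov's theorem \cite{ze1,ze2} $K$, and a fortiori $L'$, is nilpotent of $e$-bounded class. Translating back through the Lazard correspondence by the techniques of \cite{khushu,anovo}, using that $G'$ is generated by the exponent-$e$ centralizers together with the nilpotency just established, yields that $G'$ has $e$-bounded exponent. The role of $C_L(r)$, the one part escaping the control of $\alpha$, matches the fact that the exponent of $G$ itself cannot be bounded, only that of $G'$.
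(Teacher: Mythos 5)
Your argument has a fatal gap at exactly the point you yourself identify as ``the main obstacle'': the solubility step. The version of Kreknin's theorem you invoke --- that a $\mathbb{Z}_n$-graded Lie ring with \emph{nilpotent} (or abelian) zero component is soluble of bounded derived length --- is false. Kreknin's theorem requires the zero component to be \emph{trivial}. A counterexample to your version is $\mathfrak{sl}_2=\langle e,f,h\rangle$ with the $\mathbb{Z}_4$-grading $L_0=\langle h\rangle$, $L_1=\langle e\rangle$, $L_2=0$, $L_3=\langle f\rangle$: the zero component is one-dimensional abelian, yet the algebra is simple. In the present situation $C_G(r)$ (hence $C_L(r)$) need not vanish --- only $C_G(V)=1$ is assumed --- so you genuinely are in the ``nonzero abelian zero component'' case, and no polynomial identity is obtained this way. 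This is precisely the difficulty flagged in the introduction (a fixed-point-free non-cyclic group of automorphisms of a Lie algebra does not bound the derived length), and it is why the paper does not derive solubility from any general graded-Lie-ring theorem: instead it proves the ad hoc Proposition \ref{lie}, whose double induction on ``critical elements'' manufactures bounded solubility step by step (via Lemmas \ref{991}, \ref{992} and repeated passage to quotients by bounded-class ideals) before Proposition \ref{4} converts it into nilpotency.

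A second, independent gap is the application of Zelmanov's theorem to $K=\langle C_L(\alpha),C_L(z\alpha)\rangle$. The global nilpotency theorem needs a finite generating set all of whose \emph{commutators} are ad-nilpotent of bounded index, not merely ad-nilpotent generators; commutators of elements of $C_L(\alpha)$ with elements of $C_L(z\alpha)$ can land in the $(-1)$-eigenspace of $\alpha$ on $C_L(v_3)$, i.e.\ exactly in $C_L(r)$, where you have no ad-nilpotency information. The paper avoids this by first reducing (via Theorem \ref{main3}, which you do not use) to bounding the order of a single $x_1\in G_1$ inside $G=\langle x_1,x_1^\alpha\rangle$, so that $L$ is generated by $\{x_1,x_1^\alpha\}$ and the Restricted Burnside Problem is used only in the weak form that any two homogeneous elements of $L_\alpha$ generate a subalgebra of bounded class --- which is then fed into Proposition \ref{lie}. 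Your reduction to odd $p$ via $O_2(G)=1$ and your description of the eigenspace decomposition are correct, but without a valid route to a polynomial identity and a valid Zelmanov input the proof does not go through.
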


The proofs of both Theorem \ref{main1} and Theorem \ref{main2} use the solution of the Restricted Burnside Problem. Another important tool used in the proof of the above results is the theorem that the exponent of a finite group acted on by a non-cyclic abelian group $A$  is bounded in terms of $|A|$ and the exponents of $C_G(a)$, where $a\in A\setminus\{1\}$ \cite{khushu}. As a part of the proof of Theorem \ref{main2} we obtained the following result that seems to be of independent interest.

\begin{theorem}\label{main3} Let $G$ be a finite group acted on by the four-group $V$ in such a manner that $C_G(V)=1$. Suppose that the centralizers $C_G(v_1)$ and $C_G(v_2)$ of two involutions $v_1,v_2\in V$ have exponent $e$. Then the exponent of $G'$ is bounded in terms of $e$ only.
\end{theorem}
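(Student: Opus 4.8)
The plan is to reduce the assertion to the statement that $C_{G'}(v_3)$ has $e$-bounded exponent, where $v_3=v_1v_2$, and then to invoke the theorem of \cite{khushu}. Indeed, since $G'$ is $V$-invariant and $C_{G'}(V)\le C_G(V)=1$, the result of \cite{khushu} bounds $\exp(G')$ in terms of $\exp C_{G'}(v_1)$, $\exp C_{G'}(v_2)$ and $\exp C_{G'}(v_3)$; the first two are harmless, as $C_{G'}(v_i)\le C_G(v_i)$ has exponent dividing $e$ for $i=1,2$, so everything comes down to the third centralizer. A preliminary observation I would record first is that each $C_G(v_i)$ is abelian: a complementary involution acts on it without nontrivial fixed points, since for instance $C_{C_G(v_3)}(v_1)=C_G(\langle v_1,v_3\rangle)=C_G(V)=1$, so $v_1$ induces a fixed-point-free automorphism of order two on $C_G(v_3)$ and therefore inverts it, forcing $C_G(v_3)$ to be abelian; the same argument applies to $C_G(v_1)$ and $C_G(v_2)$.

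Next I would reduce to the case where $G$ is a finite $p$-group, working one prime at a time and passing to suitable $V$-invariant sections, the hypotheses $C_G(V)=1$ and $\exp C_G(v_i)\mid e$ being inherited. With $G$ a $p$-group in hand, the natural tool is the Lie ring $L=L(G)$ associated with a suitable central filtration, on which $V$ acts with $C_L(V)=0$. For $p$ odd the action of the four-group is completely reducible and $L$ decomposes into eigenspaces $L=L_1\oplus L_2\oplus L_3$ indexed by the nontrivial characters of $V$, with $C_L(v_i)=L_i$ and $[L_i,L_j]\subseteq L_k$ whenever $\{i,j,k\}=\{1,2,3\}$; in particular each $L_i$ is abelian and the $v_3$-fixed part of the derived algebra is exactly $C_{L'}(v_3)=[L_1,L_2]$. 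Thus on the Lie side the goal becomes a bound on the nilpotency of the subalgebra $M=\langle L_1,L_2\rangle=L_1+L_2+[L_1,L_2]$.

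The heart of the matter is to prove that $M$ is nilpotent of $e$-bounded class, and here two ingredients combine. First, because $C_G(v_1)$ and $C_G(v_2)$ have exponent $e$, the homogeneous elements of $L_1$ and $L_2$ are ad-nilpotent of $e$-bounded index; the four-group grading of $\mathrm{sl}_2$, whose components are semisimple, shows that it is precisely this ad-nilpotency that must be exploited, since the grading alone does not force nilpotency. Second, since the neutral component $C_L(V)$ of the $V$-grading vanishes, a theorem on identities of group-graded algebras yields that $L$ satisfies a polynomial identity of degree bounded solely in terms of $|V|=4$. With a polynomial identity of bounded degree and ad-nilpotent generators in hand, Zelmanov's theorem gives that $M$ is nilpotent of class bounded in terms of $e$ only. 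Translating this nilpotency back through the Lie-ring correspondence and the solution of the Restricted Burnside Problem, one obtains that $C_{G'}(v_3)=G'\cap C_G(v_3)$ has $e$-bounded exponent, which is exactly what is needed to complete the argument via \cite{khushu}.

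The main obstacle I anticipate is the characteristic-two case: when $G$ is a $2$-group the four-group acts on $L$ without a complete eigenspace decomposition, so the clean grading above is unavailable and the reduction to Zelmanov's theorem must be arranged differently, for instance by working with a $V$-invariant filtration and the graded-identity machinery over the field of two elements, or by controlling the $2$-part of $G'$ directly from the abelianness of the $C_G(v_i)$. A secondary point requiring care, even for odd $p$, is the bookkeeping that passes from the nilpotency of $M$ back to the exponent of $C_{G'}(v_3)$: one must check that the homogeneous component $[L_1,L_2]$ genuinely captures $G'\cap C_G(v_3)$, and that the number of generators fed into Zelmanov's theorem can be kept bounded, so that the resulting class bound, and hence the exponent bound, depends on $e$ alone.
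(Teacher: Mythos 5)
Your global skeleton agrees with the paper's: reduce to bounding the exponent of $G'\cap C_G(v_3)$ and finish with Theorem \ref{q2} of \cite{khushu}, after recording that the $C_G(v_i)$ are abelian. (One small point: a finite group admitting a fixed-point-free four-group of automorphisms has odd order, so the characteristic-two case you worry about at the end never arises.) But the heart of your argument breaks down. Zelmanov's nilpotency theorem for PI Lie algebras requires that \emph{every commutator in the chosen generators} be ad-nilpotent of bounded index, not merely the generators themselves. In your subalgebra $M=\langle L_1,L_2\rangle$ the hypotheses deliver ad-nilpotency only for homogeneous elements of $L_1$ and $L_2$; a commutator $[u,v]$ with $u\in L_1$, $v\in L_2$ lies in $L_3$, and nothing in the hypotheses bounds its ad-nilpotency index --- indeed, bounding the orders of elements of $G'\cap G_3$ is precisely what the theorem asserts, so assuming control there is circular. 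This is the central difficulty of the whole problem, not a technicality, and ``PI plus ad-nilpotent generators'' does not get past it. A second, related gap is the reduction to finitely many generators, which you defer as bookkeeping: without it neither Zelmanov's theorem nor the passage from nilpotency of the Lie algebra back to the group (via powerful subgroups of bounded index) makes sense.

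The paper resolves both issues at once with a device you are missing: by Lemma \ref{commu22}, $K_3=G'\cap G_3$ is generated (in the nilpotent case) by the elements $a*b$ with $a\in G_1$, $b\in G_2$, and since $K_3$ is abelian it suffices to bound the order of each $a*b$ inside $\langle a,b\rangle$; so one may assume $G=\langle a,b\rangle$ is $2$-generated. Then $L_p(G)$ is generated by the two ad-nilpotent homogeneous elements $\bar a\in L_1$, $\bar b\in L_2$, and Lemma \ref{992} (the ideal generated by a single ad-nilpotent homogeneous element of a component is nilpotent of bounded class) together with Fitting's theorem gives bounded nilpotency class of $L$ --- no polynomial identity and no Zelmanov theorem are needed for this statement. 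After that, Lemma \ref{4.9}, the normality argument of Lemma \ref{normg3} forcing $H^e\le Z(G)$, and Schur's theorem bound $|G'|$. Finally, your reduction ``one prime at a time'' is not available for $G$ itself, since $G$ need not be nilpotent; the paper first handles nilpotent $G$, deduces that $G''$ has bounded exponent in general, and then runs a separate metabelian argument for $G/G''$. As it stands, your proposal's key step is unjustified and the approach would need to be restructured along these lines.
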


In view of the above results a number of questions about the exponent of a finite group with automorphisms can be asked. In particular, it would be interesting to see if similar results hold in the situation where $V$ is an elementary abelian $p$-group for an odd prime $p$.

Throughout the paper we use the expression ``$(m,n)$-bounded'' for ``bounded above in terms of $m,\,n$ only''.

\section{A criterion of nilpotency for Lie algebras}

In this section we will describe some key Lie-theoretic tools required for the proofs of the main results. In particular we will establish a sufficient condition for a Lie algebra with automorphisms to be nilpotent. Though the hypotheses of Proposition \ref{lie} below may look bizarre, the proposition is sufficient (and perhaps even necessary) for  the group-theoretic applications that will be obtained in Section 5. In what follows the term ``Lie algebra" means a Lie algebra over some commutative ring with unity in which 2 is invertible. If $X\subseteq L$ is a subset of a Lie algebra $L$, we denote by $\langle X\rangle$ the subalgebra generated by $X$. By a commutator of weight 1 in elements of $X$ we mean just any element of $X$. We define inductively commutators in $X$ of weight $w\geq 2$ as elements of the form $[x,y]$, where $x$ and $y$ are commutators in $X$ of weight $w_1$ and $w_2$ respectively such that $w_1+w_2=w$. As usual, $Z(L)$ and $\gamma_i(L)$ denote the center and the $i$th term of the lower central series of $L$, respectively. The centralizer $C_L(S)$ of a subset $S$ is the subalgebra comprised of all elements $x\in L$ such that $[S,x]=0$. If a group $A$ acts by automorphisms on $L$, we denote by $C_L(A)$ the fixed subalgebra of $L$. Recall that an element $a$ of a Lie algebra $L$ is called ad-nilpotent if there exists a positive integer $m$ such that $[x,\underbrace{a,\dots,a}_m]=0$ for all $x\in L$. If $m$ is the least integer with the above property, then we say that $a$ is ad-nilpotent of index $m$.

Let $A=V\langle\alpha\rangle$ be the dihedral group of order 8 with $V=\{1,v_1,v_2,v_3\}$ being a four-group and $\alpha$ an involution such that ${v_1}^\alpha=v_2$. Let $A$ act on a Lie algebra $L$ in such a way that $C_L(V)=0$. For $i=1,2,3$ set $L_i=C_L(v_i)$.
Then we have $L=\bigoplus_{1\leq i\leq 3}L_i$, where $L_i$ are abelian subalgebras with the property that
$$ [L_1,L_2]\leq L_3,\ \ \ [L_2,L_3]\leq L_1,\ \ \ [L_3,L_1]\leq L_2.$$
Of course, the subalgebras $L_i$ are $V$-invariant and $v_j$ acts on $L_i$ by taking every element $x\in L_i$ to $-x$ whenever $i\neq j$. Moreover
$$ {L_1}^\alpha=L_2\ \ \ \text{ and }\ \ \  {L_3}^\alpha=L_3.$$

If $X\subseteq L$ is a subset of $L$, we denote by $I(X)$ the ideal of $L$ generated by $X$
and by $ID(X)$ the minimal $A$-invariant ideal of $L$ containing $X$.

The next two lemmas are taken from \cite{99}.

\begin{lemma}\label{991} (\cite[Lemma 1.2]{99}) Suppose that $[a,b]=0$, where $a\in L_i$, $b\in L_j$ for some $i\neq j$. Then $I([b,L_i])\leq C_L(a)$.
\end{lemma}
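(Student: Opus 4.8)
The plan is to reduce to a single configuration, dispose of the easy inclusion, and then promote it to the whole ideal by an induction driven by the derivation $\mathrm{ad}\,a$. Since the bracket relations among $L_1,L_2,L_3$ together with the abelianness of each $L_k$ are invariant under every permutation of the indices $1,2,3$, I would assume without loss of generality that $i=1$ and $j=2$, so that $a\in L_1$, $b\in L_2$ and $[b,L_1]\leq L_3$. First I would check that $[b,L_1]\leq C_L(a)$: for $x\in L_1$ the Jacobi identity gives $[[b,x],a]=-[[x,a],b]-[[a,b],x]$, where $[x,a]=0$ because $L_1$ is abelian and $[a,b]=0$ by hypothesis, whence $[[b,x],a]=0$. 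Thus every generator of $W:=[b,L_1]$ centralizes $a$.

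To pass from $W$ to the ideal $I(W)$ I would exploit that $D=\mathrm{ad}\,a$ is a derivation of $L$ that vanishes on $W$. For a typical ideal generator $[w,g_1,\dots,g_n]$ with $w\in W$, the derivation rule yields
\[
 [a,[w,g_1,\dots,g_n]]=\sum_{r=1}^{n}[w,g_1,\dots,g_{r-1},[a,g_r],g_{r+1},\dots,g_n],
\]
the term in which $D$ hits $w$ being zero. Each summand is a left-normed bracket whose initial segment is $[[w,g_1,\dots,g_{r-1}],[a,g_r]]$, so it vanishes once that segment does. Hence everything reduces to the auxiliary claim that $[[w,g_1,\dots,g_m],[a,h]]=0$ for all $m\geq0$ and all $g_1,\dots,g_m,h\in L$, which I would prove by induction on $m$. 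The base case $m=0$ asserts that $W$ centralizes $[a,L]$: writing $w=[b,x]$ and splitting $[a,h]$ into homogeneous parts, the component lying in $L_3$ is annihilated because $L_3$ is abelian, while the component in $L_2$ is annihilated by a short Jacobi computation using $[[b,x],a]=0$ together with $[L_3,L_3]=0$.

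The hard part will be the inductive step, because $\mathrm{Im}(D)=[a,L]$ is \emph{not} an ideal, so expanding $[[w,g_1,\dots,g_m],[a,h]]$ by Jacobi produces a cross term of the shape $[\,\cdot\,,[g_m,[a,h]]]$ whose inner bracket need not lie back in $[a,L]$. The device for closing the induction is to rewrite $[g_m,[a,h]]=[a,[g_m,h]]-[[a,g_m],h]$, returning one factor to $\mathrm{Im}(D)$ where the inductive hypothesis applies, and then to keep the generator $b$ explicit throughout, invoking $[a,b]=0$ and the diagonal vanishing $[L_k,L_k]=0$ to kill the genuinely new contributions (one checks, for instance, that $[[[b,x],s],[a,t]]=0$ for $s,t\in L_2$ precisely because $[b,a]=0$). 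The hypothesis that $2$ is invertible is available as well, should a residual term have to be identified with its own transpose. This bookkeeping, organized as an induction on the weight $m$, is where essentially all the work lies; once the auxiliary claim is secured, the displayed identity gives $[a,I([b,L_1])]=0$, that is, $I([b,L_1])\leq C_L(a)$.
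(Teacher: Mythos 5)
The paper offers no proof of this lemma at all --- it is quoted verbatim from \cite[Lemma 1.2]{99} --- so there is nothing internal to compare your route against; your attempt has to stand on its own. The parts you actually carry out are sound: the reduction to $i=1$, $j=2$ is legitimate, the Jacobi computation showing $[b,L_1]\leq C_L(a)$ is correct, the derivation formula does reduce the lemma to the auxiliary claim that $[[w,g_1,\dots,g_m],[a,h]]=0$ for all $m$ (and that claim really is equivalent to the lemma, since $I(W)$ being an ideal gives $[I(W),[a,h]]\subseteq [[I(W),h],a]$), and the base case $m=0$ checks out.

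The genuine gap is the inductive step, which you defer as ``bookkeeping'' while conceding it is ``where essentially all the work lies.'' It is not bookkeeping, and the devices you name do not close it. Performing exactly the manipulations you propose --- expand $[[u,g_m],[a,h]]$ by Jacobi, kill $[[u,[a,h]],g_m]$ and $[u,[a,[g_m,h]]]$ by the inductive hypothesis, and reduce the residue $[u,[[a,g_m],h]]$ via $[u,[a,g_m]]=0$ --- leaves precisely the identity
$$[[u,g_m],[a,h]]=[[u,h],[a,g_m]],$$
i.e.\ the weight-$m$ claim with $g_m$ and $h$ interchanged. This is a \emph{symmetric} relation, so the induction is circular, and invertibility of $2$ is of no help (it would only kill an antisymmetric residue). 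The symmetry does dispose of the cases where one of $g_m,h$ lies in $L_1$, and the grading kills a few more (e.g.\ $u\in L_3$ with $h\in L_2$ gives a bracket inside the abelian $L_3$), but the remaining configurations --- for instance $u\in L_3$ with $g_m,h\in L_3$, or $u\in L_2$ with $g_m,h\in L_2$ --- keep reproducing themselves under every Jacobi rearrangement of this kind. Closing them requires a differently organized induction: take the $g_i$ homogeneous, track which component $L_k$ each partial bracket $[b,x,g_1,\dots,g_r]$ lies in, and repeatedly reduce to shorter brackets using the specific relations $[b,L_2]=0$, $[b,a]=0$ and $[b,[a,L]]=0$. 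None of that case analysis appears in your sketch, so the central assertion of the lemma is not actually established.
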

\begin{lemma}\label{992} (\cite[Proposition 1.1]{99}) Let $a\in L_1\cup L_2\cup L_3$ and suppose that $a$ is ad-nilpotent of index $m$. Then $I(a)$ is nilpotent of class at most $2m-1$.
\end{lemma}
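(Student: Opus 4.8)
The plan is to induct on the ad-nilpotency index $m$, working throughout with the grading $L=L_1\oplus L_2\oplus L_3$ by the four-group $V$, in which the components are abelian and obey the cyclic relations $[L_1,L_2]\le L_3$, $[L_2,L_3]\le L_1$, $[L_3,L_1]\le L_2$. Since these relations are symmetric in the three indices, I may assume $a\in L_1$. The first thing to record is that the map $\mathrm{ad}\,a$ annihilates $L_1$ (because $L_1$ is abelian) and interchanges $L_2$ and $L_3$, so that $(\mathrm{ad}\,a)^2$ preserves each of $L_2,L_3$; the hypothesis is that $(\mathrm{ad}\,a)^m=0$. The base case $m=1$ is immediate: then $a$ is central, $I(a)=\langle a\rangle$ is abelian, and $1=2\cdot 1-1$.

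For the inductive step I would consider the homogeneous subspace $W=(\mathrm{ad}\,a)^{m-1}L$, consisting of the elements $[x,\underbrace{a,\dots,a}_{m-1}]$. It lies in $L_2\oplus L_3$, and since $[a,W]=(\mathrm{ad}\,a)^mL=0$ we have $W\le C_L(a)$. Let $N=I(W)$ be the ideal of $L$ generated by $W$. In the quotient $\bar L=L/N$ the image $\bar a$ satisfies $(\mathrm{ad}\,\bar a)^{m-1}\bar L=\overline{W}=0$, so $\bar a$ is ad-nilpotent of index at most $m-1$; as $\overline{I(a)}=I(\bar a)$, the inductive hypothesis gives $\overline{I(a)}$ nilpotent of class at most $2(m-1)-1=2m-3$, that is $\gamma_{2m-2}(I(a))\le N$. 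To conclude that $I(a)$ has class at most $2m-1$ it then suffices to show $\gamma_{2m}(I(a))=[\gamma_{2m-2}(I(a)),I(a),I(a)]=0$, so that the kernel contributes only two further commutations; the arithmetic $2(m-1)-1+2=2m-1$ is exactly what fixes the asserted class.

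The heart of the matter, and the step I expect to be hardest, is the control of $N=I(W)$. Here Lemma \ref{991} is the decisive tool. For a homogeneous $b\in W$, say $b\in L_2$, the relation $[a,b]=0$ yields $I([b,L_1])\le C_L(a)$; a short application of the Jacobi identity, using the cyclic bracket relations and the abelianness of each $L_i$, shows in addition that $[b,L_2]=0$ and $[b,L_3]\le C_L(a)$. Assembling this information over all $b\in W$ one tracks how $N$ distributes across the three components and shows that $N\cap I(a)$ is centralized by $a$ and is annihilated by one more bracket with $[a,L]$. Since $I(a)=\langle a\rangle+I([a,L])$, this is precisely what forces the two outstanding commutators to die, giving $\gamma_{2m}(I(a))=0$.

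The delicate point throughout is that, although $W$ itself lies in $C_L(a)$, the ideal it generates need not: bracketing an $L_3$-element against an $L_2$-element creates an $L_1$-component, and it is exactly the $L_1$-components that can escape $C_L(a)$. Lemma \ref{991} is designed to defeat this leakage, and the bookkeeping of which graded components survive each bracket is where the real work lies. I would keep in reserve a more computational alternative that bypasses the induction and argues directly in $I(a)$, whose elements are spanned by the left-normed commutators $[a,y_1,\dots,y_k]$ with $y_i$ homogeneous: one uses that any such commutator containing $m$ consecutive copies of $a$ vanishes by ad-nilpotency, together with grading-constrained Jacobi reorderings. I prefer the inductive route, since the direct rewriting must contend with the $a$'s remaining dispersed among many homogeneous entries, whereas the induction isolates the obstruction cleanly in the single ideal $N$.
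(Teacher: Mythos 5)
First, a point of reference: the paper itself does not prove this lemma. It is imported verbatim from \cite[Proposition 1.1]{99}, so there is no in-paper argument to compare yours against; your attempt has to stand entirely on its own.

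As written, it does not yet. The skeleton is reasonable: induct on $m$, pass to $L/N$ with $N=I\bigl(({\rm ad}\,a)^{m-1}L\bigr)$ to obtain $\gamma_{2m-2}(I(a))\leq N$, then kill two further commutators. The base case, the $V$-invariance of $N$ (so that the quotient inherits the grading and the bracket relations), and the arithmetic $2(m-1)-1+2=2m-1$ are all in order. But the entire content of the lemma is concentrated in the step you summarize as ``assembling this information over all $b\in W$ one tracks how $N$ distributes across the three components and shows that $N\cap I(a)$ is centralized by $a$ and is annihilated by one more bracket with $[a,L]$.'' No argument is offered for either claim, and neither is routine. What you actually establish is only the first layer: for $b\in W\cap L_2$ one has $[b,L_2]=0$, $[b,L_3]\leq L_1\leq C_L(a)$ (trivially, since $a\in L_1$ and $L_1$ is abelian), and $I([b,L_1])\leq C_L(a)$ by Lemma \ref{991}. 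This does not control $N\cap I(a)$: already for $c\in L_3$ and $e\in L_2$ the element $[b,c,e]$ lies in $L_3$, and its bracket with $a$ reduces via Jacobi to $[[b,[e,a]],c]$, which is not visibly zero --- this is exactly the ``leakage through the $L_1$-components'' that you yourself flag, and it is left unresolved. There is also a sign that the intermediate claims are not even the right ones: read literally, ``centralized by $a$'' together with ``annihilated by one more bracket with $[a,L]$'' would give $[N\cap I(a),I(a)]=0$ and hence nilpotency class at most $2m-2$, strictly stronger than the asserted $2m-1$; a bound that comes out too strong from an unproved step is usually a warning that the step is false or misstated. Even the first nontrivial case $m=2$ (deduce $\gamma_4(I(a))=0$ from $\gamma_2(I(a))\leq N$) requires a genuine computation that the proposal does not supply. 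In short: a plausible plan with the decisive step missing, not a proof.
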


In the sequel we use the fact that the quotient $L/ID(X)$ naturally satisfies all the necessary assumptions without explicitly mentioning it. More generally, this applies to any quotient over an $A$-invariant ideal. Certainly $\gamma_r(L)$ and $C_L(\gamma_r(L))$ are always $A$-invariant.

We will write $L_\alpha$ for $C_L(\alpha)$. Given a set $X\subseteq L$ such that $L=\langle X\rangle$, an element of $L$ is said to be homogeneous (of weight $w$) with respect to the generating set $X$ if it can be written  as a homogeneous Lie polynomial (of degree $w$) in elements of $X$.

Our main result on nilpotency of Lie algebras is as follows.

\begin{proposition}\label{lie} Let $A=V\langle\alpha\rangle$ be the dihedral group of order 8 acting on a Lie algebra $L$ in such a way that $C_L(V)=0$.  Assume that there exists $x_1\in L_1$ such that $L=\langle x_1,{x_1}^\alpha\rangle$. Moreover, for the generating set $\{x_1,{x_1}^\alpha\}$ there exist positive integers $m$ and $n$ such that every homogeneous element contained in $L_\alpha$ is ad-nilpotent in $L$ of index at most $m$ and every pair of homogeneous elements contained in $L_\alpha$ generates a subalgebra that is nilpotent of class at most $n$. Then $L$ is nilpotent of $(m,n)$-bounded class.
\end{proposition}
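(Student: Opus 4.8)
My plan is to verify the hypotheses of Zelmanov's nilpotency theorem for the $2$-generated algebra $L=\langle x_1,x_2\rangle$, where $x_2:=x_1^\alpha$, and then extract a quantitative bound. First I would fix the two gradings at our disposal: the weight grading of $L$ with respect to the generating set $\{x_1,x_2\}$, and the $V$-grading $L=L_1\oplus L_2\oplus L_3$. Since $\alpha$ interchanges $x_1$ and $x_2$ it preserves weight and merely swaps the two generators, so for every homogeneous $h$ the symmetrized element $h+h^\alpha$ is again homogeneous and lies in $L_\alpha$; as $2$ is invertible, such elements in fact span $L_\alpha$. In particular $x_1+x_2\in L_\alpha$, hence is ad-nilpotent of index at most $m$, and more generally every $h+h^\alpha$ is ad-nilpotent of index at most $m$. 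The strategy is thus to show that (a) $L$ satisfies a polynomial identity of $n$-bounded degree, and (b) every commutator in $x_1$ and $x_2$ is ad-nilpotent of $(m,n)$-bounded index; Zelmanov's theorem then gives nilpotency, and its quantitative form (or a routine inverse-limit argument over the appropriate free object) gives the $(m,n)$-bounded class.

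For (a) I would use the hypothesis that every pair of homogeneous elements of $L_\alpha$ generates a subalgebra nilpotent of class at most $n$: since the homogeneous elements span $L_\alpha$, the vanishing of all products of length $n+1$ in two such elements yields a nontrivial multilinear Lie identity of degree bounded by $n$, so $L_\alpha$ is PI. Because $\alpha$ is an automorphism of order $2$ and $2$ is invertible, one may then invoke the transfer theorem for Lie algebras with a finite group of automorphisms (in the spirit of Bahturin--Zaicev and Linchenko): $C_L(\alpha)$ being PI forces $L$ itself to be PI, with degree bounded in terms of $n$.

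Step (b) is where the real difficulty lies, and I expect it to be the main obstacle. For a commutator $c$ lying in one of the $L_j$ the symmetrization $c+c^\alpha\in L_\alpha$ is ad-nilpotent of index at most $m$ by hypothesis, which settles the $\alpha$-symmetric part. The delicate case is that of the $\alpha$-antisymmetric monomials, the simplest being $c=[x_1,x_2]$, for which $c+c^\alpha=0$ and the hypothesis gives nothing directly; writing $\mathrm{ad}(c)$ and $\mathrm{ad}(c^\alpha)$ as commuting off-diagonal operators on $L_1\oplus L_2$ shows that ad-nilpotency of such a $c$ is not a formal consequence of that of $c+c^\alpha$, so genuine structural input is required. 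I would treat these monomials by induction on weight, feeding in the two structural lemmas: Lemma~\ref{991} converts the vanishing of a bracket into the statement that an ideal is absorbed into a centralizer, while Lemma~\ref{992} passes from ad-nilpotency of a \emph{pure} homogeneous element of $L_1\cup L_2\cup L_3$ to nilpotency of class at most $2m-1$ of the ideal it generates. Combining these with the relations $[L_1,L_2]\le L_3$, $[L_2,L_3]\le L_1$, $[L_3,L_1]\le L_2$, the abelianness of each $L_i$, and the identity furnished by (a), one aims to reduce the ad-nilpotency of an antisymmetric monomial to that of symmetric elements of the same or smaller weight.

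Finally, once (a) and (b) are in place, Zelmanov's theorem yields that $L$ is nilpotent, and tracking the parameters (or a compactness argument over the free algebra on two generators subject to our relations) shows that the class is $(m,n)$-bounded. As an alternative endgame, once (b) has delivered that $x_1$ and $x_2=x_1^\alpha$ are themselves ad-nilpotent of bounded index, Lemma~\ref{992} makes each of $I(x_1)$ and $I(x_2)$ nilpotent of class at most $2m-1$, and since $L=I(x_1)+I(x_2)$ is a sum of two nilpotent ideals it is nilpotent of bounded class by the Lie-theoretic analogue of Fitting's lemma.
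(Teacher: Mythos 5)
Your proposal is a plan rather than a proof, and its central step is missing. You aim to verify the hypotheses of Zelmanov's nilpotency criterion for the two-generated algebra $L=\langle x_1,x_2\rangle$: a polynomial identity plus ad-nilpotency of bounded index for all commutators in the generators. But in step (b) you yourself flag the $\alpha$-antisymmetric commutators --- already $[x_1,x_2]$, and indeed $x_1$ itself --- as ``the main obstacle,'' and you only say that one ``aims to reduce'' their ad-nilpotency to that of symmetric elements; no mechanism for this reduction is supplied, and none is obvious. The hypothesis controls only elements of $L_\alpha$, whereas $x_1$, $x_2$ and $[x_1,x_2]$ are not fixed by $\alpha$; the operators $\mathrm{ad}(x_1+x_2)$ and $\mathrm{ad}(x_1-x_2)$ do not commute (their bracket involves $[x_1,x_2]$), so their nilpotency gives no control over $\mathrm{ad}(x_1)$. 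This is precisely the difficulty the paper's argument is built to avoid: instead of proving ad-nilpotency of commutators and invoking Zelmanov, it runs a double induction --- on the least $k$ with $[x_1-x_2,\underbrace{x,\dots,x}_{k}]=0$ for $x=x_1+x_2$, and then on the number of commutators in $x$ and the element $d=t_1+t_1^\alpha$ --- producing at each stage a ``critical'' element modulo whose $A$-invariant ideal $L$ is nilpotent of bounded class, and closing via Lemmas \ref{991} and \ref{992} together with the soluble case (Proposition \ref{4}). Your ``alternative endgame'' ($L=I(x_1)+I(x_2)$ plus Fitting's lemma) is sound as far as it goes, but it merely restates the unproved claim that $x_1$ is ad-nilpotent of bounded index.

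Step (a) is also gapped. The hypothesis yields nilpotency of class at most $n$ only for subalgebras generated by \emph{pairs of homogeneous} elements of $L_\alpha$. To convert this into a multilinear identity valid on all of $L_\alpha$ one must linearize, which requires substituting sums of homogeneous elements of different weights --- substitutions the hypothesis says nothing about --- and over a small ground ring (the paper only assumes $2$ invertible) the multihomogeneous components cannot be isolated by a Vandermonde argument. Hence the input to the Bahturin--Zaicev/Linchenko transfer theorem is not actually established. With both pillars of the Zelmanov route unsupported, the proposal does not constitute a proof of the proposition.
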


First we establish the following related result.

\begin{proposition}\label{4} Assume the hypothesis of Proposition \ref{lie} and let $L$ be soluble with derived length $k$. Then $L$ is nilpotent of $(k,m,n)$-bounded class.
\end{proposition}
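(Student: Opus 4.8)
The plan is to argue by induction on the derived length $k$. If $k=1$ then $L$ is abelian and there is nothing to prove, so I assume $k\geq 2$ and that the statement holds for soluble algebras of derived length $k-1$. I would put $M=L^{(k-1)}$, the last nonzero term of the derived series; it is an abelian ideal, it is $A$-invariant, and $C_M(V)\leq C_L(V)=0$. The quotient $\bar L=L/M$ is taken over an $A$-invariant ideal, so it again satisfies all the hypotheses of Proposition \ref{lie} with respect to the images $\bar x_1,\,{\bar x_1}^\alpha$: a homogeneous element of ${\bar L}_\alpha$ lifts, after symmetrization by $\alpha$ (replacing a homogeneous lift $h$ by $\frac{1}{2}(h+h^\alpha)$), to a homogeneous element of $L_\alpha$, so the constants $m$ and $n$ are inherited. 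Since $\bar L$ has derived length $k-1$, by the inductive hypothesis it is nilpotent of $(k-1,m,n)$-bounded class, say of class $c$; hence $\gamma_{c+1}(L)\leq M$.

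It therefore suffices to bound, in terms of $k,m,n$ alone, an integer $s$ with $M\leq Z_s(L)$: once this is achieved, $\gamma_{c+1}(L)\leq M\leq Z_s(L)$ forces $L$ to be nilpotent of class at most $c+s$. Equivalently, I must show that the adjoint action of $L$ on the abelian ideal $M$ is nilpotent of $(m,n)$-bounded index. Decomposing $M=M_1\oplus M_2\oplus M_3$ with $M_i=M\cap L_i$, the operator $\mathrm{ad}(x_1)$ interchanges $M_2$ and $M_3$ and annihilates $M_1$, while $\mathrm{ad}(x_1^\alpha)$ interchanges $M_1$ and $M_3$ and annihilates $M_2$. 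As $L=\langle x_1,x_1^\alpha\rangle$ and $M$ is an ideal, the operators $\mathrm{ad}(y)|_M$ for $y\in L$ are Lie-polynomial expressions in $\mathrm{ad}(x_1)|_M$ and $\mathrm{ad}(x_1^\alpha)|_M$, so bounding the action on $M$ amounts to bounding the length of nonzero associative products of these two operators.

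To produce nilpotent operators I would exploit the symmetry of $A$: the involutions $\alpha$ and $\alpha v_3$ are conjugate in $A$ (by the order-four element $\alpha v_1$, which carries the generating set $\{x_1,x_1^\alpha\}$ to $\{-x_1^\alpha,x_1\}$), so the hypotheses imposed on $L_\alpha$ hold verbatim for $C_L(\alpha v_3)$ as well. Consequently both $x_1+x_1^\alpha\in L_\alpha$ and $x_1-x_1^\alpha\in C_L(\alpha v_3)$ are ad-nilpotent of index at most $m$, and more generally every homogeneous element of $L_\alpha\cup C_L(\alpha v_3)$ is ad-nilpotent of index at most $m$, any two of them generating a subalgebra nilpotent of class at most $n$. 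The identity $\mathrm{ad}(x_1)=\frac{1}{2}\bigl(\mathrm{ad}(x_1+x_1^\alpha)+\mathrm{ad}(x_1-x_1^\alpha)\bigr)$ then expresses the generating operators through these ad-nilpotent ones.

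The heart of the proof, and the step I expect to be the main obstacle, is to upgrade this into nilpotency of the \emph{whole} action on $M$, with an index bounded by a function of $m$ and $n$ only and, crucially, \emph{not} of $\dim M$. The subtlety is that ad-nilpotency of the diagonal sums $x_1\pm x_1^\alpha$ does not by itself force their homogeneous components $x_1,x_1^\alpha$ to act nilpotently on $M$: for the three-dimensional simple Lie algebra graded by a four-group the components of an ad-nilpotent element can fail to be ad-nilpotent, so the ad-nilpotency hypothesis alone is insufficient. This is exactly where the pairwise-nilpotency condition must enter essentially. Restricting to $M$, the homogeneous elements of $L_\alpha$ give a family of nilpotent operators, any two of which generate a Lie algebra of operators nilpotent of class at most $n$; combining this Engel-type information with Lemma \ref{992} (ad-nilpotent elements of $L_1\cup L_2\cup L_3$ generate nilpotent ideals of class at most $2m-1$) and with Lemma \ref{991} (to collapse commuting configurations $[a,b]=0$ into centralizers), I would show that the associative algebra generated by $\mathrm{ad}(x_1)$ and $\mathrm{ad}(x_1^\alpha)$ on $M$ is nilpotent of $(m,n)$-bounded index. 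Solubility enters precisely here: having reduced to the abelian module $M$, the commutator calculus closes up with a bound depending only on $k$, $m$ and $n$, which is what the induction requires.
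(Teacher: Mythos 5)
Your reduction is sound as far as it goes: passing to $\bar L=L/L^{(k-1)}$, symmetrizing lifts by $\alpha$ to preserve the hypotheses, and observing that it suffices to bound the nilpotency index of the associative algebra generated by $\mathrm{ad}(x_1)|_M$ and $\mathrm{ad}(x_1^\alpha)|_M$ on the abelian ideal $M=L^{(k-1)}$ — all of that is correct, as is the conjugacy argument showing that $x_1-x_1^\alpha\in C_L(\alpha v_3)$ inherits ad-nilpotency of index at most $m$. But the proof stops exactly where it would have to begin. The entire content of the proposition is the step you label ``the heart of the proof'': you state that you ``would show'' the action on $M$ is nilpotent of bounded index by ``combining this Engel-type information with Lemma \ref{992} and Lemma \ref{991},'' but no argument is given, and none of the cited lemmas applies directly ($x_1$ and $x_1^\alpha$ themselves are not assumed ad-nilpotent, and Lemma \ref{992} requires an ad-nilpotent element of $L_1\cup L_2\cup L_3$, which is precisely what you do not have). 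You correctly diagnose that ad-nilpotency of $x_1\pm x_1^\alpha$ does not force ad-nilpotency of the components, but you do not overcome that obstacle. So the proposal has a genuine gap at its only nontrivial step.

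For contrast, the paper's proof sidesteps the module-theoretic analysis entirely. It first invokes the main result of \cite{withcarmela2}: for a soluble Lie algebra graded by the four-group with trivial zero-component, $L'$ is nilpotent of $k$-bounded class. By the Lie-algebra analogue of Hall's nilpotency criterion \cite{hall}, one may then assume $L$ is metabelian. In the metabelian case the elements $x=x_1+x_1^\alpha$ and $y=x_1-x_1^\alpha=x^{v_1}$ generate $L$ and are each ad-nilpotent of index at most $m$ (the second because it is the image of the first under an automorphism — essentially your conjugacy observation), and a metabelian Lie algebra generated by two ad-nilpotent elements of index at most $m$ is nilpotent of bounded class. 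If you want to salvage your induction on derived length, you would in effect have to reprove the result of \cite{withcarmela2}; the shorter route is to cite it and reduce to the metabelian case as the paper does.
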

\begin{proof} By the main result in \cite{withcarmela2}, $L'$ is nilpotent
of $k$-bounded class. Using the Lie algebra analogue of Hall's
criterion of nilpotency \cite{hall}, we can assume that $L$ is metabelian. Put $x_2={x_1}^\alpha$, $x=x_1+x_2$ and $y=x_1-x_2$. It is clear that $L=\langle x,y\rangle$. Since $x$ is a homogeneous element contained in $L_\alpha$, it follows that $x$ is ad-nilpotent in $L$ of index at most $m$. We also notice that $y=x^{v_1}$. Therefore $y$ is ad-nilpotent in $L$ of index at most $m$, as well. Thus, $L$ is a metabelian Lie algebra generated by two ad-nilpotent elements of index at most $m$. It follows that $L$ is nilpotent of $(k,m,n)$-bounded class. \end{proof}

For every $x\in L$ we write $x=x_1+x_2+x_3$, where $x_i\in L_i$.
We will require the following elementary lemma.

\begin{lemma}\label{rrr} Suppose that $y\in L_i$ for some $i$ and assume that $x\in C_L(y)$. Then also $x_1,x_2,x_3\in C_L(y)$.
\end{lemma}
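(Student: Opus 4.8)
The plan is to exploit directly the grading $L=L_1\oplus L_2\oplus L_3$ together with the displayed cyclic commutator relations and the fact that each $L_j$ is abelian. By the evident cyclic symmetry of the hypotheses in the indices $1,2,3$, it suffices to treat the case $y\in L_1$; the arguments for $y\in L_2$ and $y\in L_3$ are identical after a cyclic relabelling.

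So I would start from the decomposition $x=x_1+x_2+x_3$ with $x_j\in L_j$ and expand the bracket as
$$[x,y]=[x_1,y]+[x_2,y]+[x_3,y].$$
Now I would examine where each summand lives. Since $y\in L_1$ and $x_1\in L_1$ with $L_1$ abelian, we get $[x_1,y]=0$. From $[L_2,L_1]\le L_3$ we obtain $[x_2,y]\in L_3$, and from $[L_3,L_1]\le L_2$ we obtain $[x_3,y]\in L_2$. Thus the three contributions to $[x,y]$ lie in $L_1$, $L_3$ and $L_2$ respectively — that is, in pairwise distinct homogeneous summands of the direct sum $L=L_1\oplus L_2\oplus L_3$.

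Finally, since $x\in C_L(y)$ means $[x,y]=0$, I would read off the components of this identity in each summand separately: the $L_1$-component gives $[x_1,y]=0$ (already clear), the $L_3$-component gives $[x_2,y]=0$, and the $L_2$-component gives $[x_3,y]=0$. Hence $x_1,x_2,x_3\in C_L(y)$, as claimed. The only point requiring genuine care — and the closest thing to an obstacle — is the verification that bracketing against an element of $L_i$ carries the three homogeneous pieces of $x$ into three \emph{distinct} graded summands (with the piece lying in $L_i$ itself being annihilated); this separation is precisely what allows the single relation $[x,y]=0$ to split into three independent conditions, and it rests entirely on the abelianness of the $L_j$ and the cyclic structure of the relations $[L_1,L_2]\le L_3$, $[L_2,L_3]\le L_1$, $[L_3,L_1]\le L_2$.
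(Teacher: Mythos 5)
Your proof is correct, and it takes a slightly different route from the paper's. You argue directly from the grading: expanding $[x,y]=[x_1,y]+[x_2,y]+[x_3,y]$, the summand whose index matches that of $y$ vanishes because the $L_j$ are abelian, and the remaining two brackets land in two \emph{distinct} summands of $L=L_1\oplus L_2\oplus L_3$, so the single relation $[x,y]=0$ splits into three independent ones. The paper instead observes that the line spanned by $y$ is $V$-invariant (each $v_j$ sends $y$ to $\pm y$), hence $C_L(y)$ is a $V$-invariant subspace and therefore decomposes as $\bigoplus_k\bigl(C_L(y)\cap L_k\bigr)$; the components $x_k$ of $x\in C_L(y)$ then lie in $C_L(y)$ by uniqueness of the decomposition of $x$. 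The two arguments are two faces of the same fact, since the $V$-action and the grading determine one another: yours is a self-contained computation that never invokes the group action and makes explicit where each bracket lives, while the paper's is shorter and applies verbatim to the centralizer of any $V$-invariant subset. Both are complete proofs.
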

\begin{proof} We notice that the 1-dimensional subspace $\langle y\rangle$ is $A$-invariant. Therefore the centralizer $C_L(y)$ is $A$-invariant as well. Hence $C_L(y)$ is the direct sum of the subspaces $C_L(y)\cap L_k$ for $k=1,2,3$. The lemma follows.
\end{proof}
\begin{proof}[Proof of Proposition \ref{lie}] Set $x=x_1+x_2$. It is clear that $x\in L_{\alpha}$. By the hypothesis, $x$ is ad-nilpotent in $L$ of index at most $m$. Let $k$ be the minimal number such that $$[x_1-x_2, \underbrace{x,\ldots,x}_k]=0.$$ The proposition will be proved by induction on $k$. We know that $k\leq m$. If $k=1$, then, since 2 is invertible in the ground ring of $L$, it follows that $x_1$ and $x_2$ commute and so $L$ is abelian. Hence, we can assume that $k\geq 2$. In what follows we will call an element $y\in L$ {\it critical} to mean that $L/ID(y)$ is nilpotent of $(m,n)$-bounded class. Thus, the induction hypothesis is that the elements $[x_{1}-x_{2},\underbrace{x,\ldots, x}_{i}]$ are critical for all $i\leq k-1$. Set $t=[x_{1}-x_{2}, \underbrace{x,\ldots,x}_{k-1}]$. If $k-1$ is odd, then $t\in L_3$. Taking into account that $t$ lies in the centralizer of $x$ and using Lemma \ref{rrr}, we conclude that $t$ commutes with $x_{1}$ and $x_{2}$. Hence, $t$ is a critical element that belongs to $Z(L)$ and so the result follows.

Therefore we assume that $k-1$ is even and so $t\in L_1+L_2$. We remark that $t \not\in L_{\alpha}$ because it has the form $l-l^{\alpha}$ for suitable $l \in L$.

Consider the elements $d=t_1+{t_1}^{\alpha}$, $l=t-d$ and $g = [x,d]$. It is easy to see that $d\in L_{\alpha}$, $0\neq l\in L_{2}$ and $g\in L_{3}$.

Suppose first that $g=0$. Then $l$ commutes with $x$. It follows from Lemma \ref{rrr} that $l\in Z(L)$. We remark that $l$ is a critical element. Hence, the result follows.

Therefore we can assume that $g\neq 0$. We will use the number of distinct
commutators in $x$ and $d$ as the second induction parameter. By the hypothesis this number is $n$-bounded. The induction hypothesis will be that every
non-zero commutator in $x$ and $d$ is a critical element. Choose a commutator
$z$ in $x$ and $d$ such that $0\neq z\in Z(\langle x,d\rangle)$. It is clear that $z$ is a critical element. Since both $x$ and $d$ lie in $L_{1}+L_{2}$, every commutator in $x$ and $d$ lies either in $L_3$ or in $L_{1}+L_{2}$. If $z \in L_{3}$, using the fact that $[z,x]=0$ we deduce that $z$ belongs to $Z(L)$ and so the result follows. Suppose that $z=z_{1}+z_{2}\in L_{1}+L_{2}$. Since $z\in L_\alpha$, it follows that $z_{2}=z_{1}^{\alpha}$. Because $g\in L_{3}$ and $[g,z]=0$, Lemma \ref{rrr} implies that $g$ commutes with both $z_{1}$ and $z_{2}$. Moreover, taking into account that $[x_{1},x_{2}]\in L_{3}$ and using Lemma \ref{991} we conclude that $g$ commutes with both ideals $I([x_{1},x_{2},z_{1}])$ and $I([x_{1},x_{2},z_{2}])$.

We remark that $[x_{1},x_{2},z_{1}]-[x_{1},x_{2},z_{2}]\in L_{\alpha}$ and so this element is ad-nilpotent of index at most $m$. Since $$([x_{1},x_{2},z_{1}]-[x_{1},x_{2},z_{2}])^{v_2}=[x_{1},x_{2},z_{1}]+[x_{1},x_{2},z_{2}],$$ we conclude that also $[x_{1},x_{2},z_{1}]+[x_{1},x_{2},z_{2}]$ is ad-nilpotent of index at most $m$.

Set $N=I([x_{1},x_{2},z_{1}])\cap I([x_{1},x_{2},z_{2}])$ and $$J=I([x_{1},x_{2},z_{1}])+ I([x_{1},x_{2},z_{2}])=ID([x_{1},x_{2},z_{1}]).$$
Suppose that $N=0$. In this case $[x_{1},x_{2},z_{1}]$ commutes with $[x_{1},x_{2},z_{2}]$. Since 2 is invertible in the ground ring of $L$ and both $[x_{1},x_{2},z_{1}]+[x_{1},x_{2},z_{2}]$ and $[x_{1},x_{2},z_{1}]-[x_{1},x_{2},z_{2}]$ are ad-nilpotent of index at most $m$, we deduce that $[x_{1},x_{2},z_{1}]$ and $[x_{1},x_{2},z_{2}]$ are ad-nilpotent of index at most $2m-1$.

Lemma \ref{992} shows that both ideals $I([x_1,x_2,z_1])$ and $I([x_1,x_2,z_2])$ are nilpotent of index at most $4m-3$ and so the ideal $J$ is nilpotent of bounded class. Let us pass to the quotient $L/J$ (we use of course that $J$ is $A$-invariant). For simplicity we just assume that $J=0$. Thus both $z_{1}$ and $z_{2}$ commute with $[x_{1},x_{2}]$. By Lemma \ref{991} $z_{1}$ commutes with $I([x_{1},x_{2},x_1])$ and $z_{2}$ commutes with $I([x_{1},x_{2},x_2])$.  Therefore $z$ commutes with the intersection $M=I([x_{1},x_{2},x_{1}])\cap I([x_{1},x_{2},x_{2}])$.

If $M=0$, then $[x_{1},x_{2},x_{1}]$ commutes with $[x_{1},x_{2},x_{1}]^{\alpha} = [x_{2},x_{1},x_{2}]$. With the same argument of above, we deduce that $ID([x_{1},x_{2},x_{1}])$ is nilpotent of bounded class. Passing to the quotient $L/ID([x_{1},x_{2},x_{1}])$ we can assume that $[x_{1},x_{2}]$ commutes with both $x_{1}$ and $x_{2}$. In this case $L$ is nilpotent of class at most 2.  Hence, $L/ID([x_{1},x_{2},x_{1}])$ is nilpotent of class at most 2. In particular we have shown that if $M=0$ then $L$ is soluble with bounded derived length. Proposition \ref{4} yields that if $M=0$ then $L$ is nilpotent of bounded class. Hence, $M$ contains $\gamma_{i}(L)$ for some bounded $i$. Using the fact that $z$ is a critical element that centralizes $M$ we now deduce that the algebra $L$ is soluble with bounded derived length. Proposition \ref{4} now implies that $L$ is nilpotent with bounded class. Recall that we have assumed that $N=0$. Now we will drop this assumption.

Since $N$ is $A$-invariant, we can now conclude that $N$ contains $\gamma_{j}(L)$ for some bounded $j$. Recall that $g$ commutes with both ideals $I([x_{1},x_{2},z_{1}])$ and $I([x_{1},x_{2},z_{2}])$. In particular, it follows that $g$ commutes with $N$. Therefore we found that $g$ is a critical element commuting with $\gamma_{j}(L)$ for some bounded $j$. We conclude that $L$ is soluble with bounded derived length. Another application of Proposition \ref{4} completes the proof.
\end{proof}

\section{Associating Lie algebra to a group}

The proofs of the main results of this paper are based on the so called Lie methods. We will now describe the construction that associates with any group a Lie algebra over the field with $p$ elements. This section does not contain new results and is given here only for the reader's convenience.

Let $G$ be a group and $p$ a prime. We set
$$D_i=D_i(G)=\prod\limits_{jp^k\geq i}\gamma_j(G)^{p^k}.$$ The
subgroups $D_i$ form the Jennings--Zassenhaus filtration
$$G=D_1\geq D_2\geq\cdots $$ of the group $G$. The series satisfies the inclusions $[D_i,D_j]\leq D_{i+j}$ and $D_i^p\leq D_{pi}$ for all $i,j$.
These properties make it possible to construct a Lie algebra
$DL(G)$ over $\mathbb F_p$, the field with $p$ elements. Namely,
consider the quotients $K_i=D_i/D_{i+1}$ as linear spaces over
$\mathbb F_p$, and let $DL(G)$ be the direct sum of these spaces.
Commutation in $G$ induces a binary operation $[\, ,]$ in $DL(G)$.
For elements $xD_{i+1}\in K_i$ and $yD_{j+1}\in K_j$ the operation
is defined by $$[xD_{i+1},yD_{j+1}]=[x,y]D_{i+j+1}\in K_{i+j}$$
and extended to arbitrary elements of $DL(G)$ by linearity. It is
easy to check that the operation is well-defined and that $DL(G)$
with the operations $+$ and $[\, ,]$ is a Lie algebra over
$\mathbb F_p$.

 For any $x\in D_i\setminus D_{i+1}$ let $\bar x$ denote the element $xD_{i+1}$ of $DL(G)$.

\begin{lemma}[Lazard \cite{la}]\label{3.6}
For any $x\in G$ we have $({\rm ad}\, \bar x)^p={\rm ad}\,
\overline{x^p}$. Consequently, if $x$ is of finite order $p^t$,
then $\bar x$ is ad-nilpotent of index at most $p^t$.
\end{lemma}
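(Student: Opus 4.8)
The plan is to first establish the operator identity $(\mathrm{ad}\,\bar x)^p=\mathrm{ad}\,\overline{x^p}$ and then to deduce the ad-nilpotency statement by iterating it. The heart of the matter is a purely group-theoretic commutator identity, so I would begin there. Since $\mathrm{ad}$ is linear and $DL(G)$ is spanned by the homogeneous elements $\bar y$, it suffices to check the identity on such elements. Fix $x\in D_a\setminus D_{a+1}$, so that $\bar x\in K_a$, and let $y\in D_b\setminus D_{b+1}$, so that $\bar y\in K_b$. The Hall--Petrescu collection formula, applied to the $p$-th power $x^p$, yields a congruence of the shape
$$[y,x^p]\equiv [y,x]^{\binom p1}\,[y,x,x]^{\binom p2}\cdots [y,\underbrace{x,\dots,x}_{p}]^{\binom pp}\pmod{C},$$
where $[y,\underbrace{x,\dots,x}_{i}]$ is the left-normed group commutator and $C$ collects commutators of higher weight in $x$ and $y$ together with deeper terms of the filtration.

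The key observation is then a degree-and-characteristic count. Passing to $DL(G)$, the left side $[y,x^p]$ maps to $\mathrm{ad}\,\overline{x^p}$ applied to $\bar y$, while the last factor on the right maps to $(\mathrm{ad}\,\bar x)^p(\bar y)$; both live in the homogeneous component of degree $b+pa$. For $1\le i\le p-1$ the binomial coefficient $\binom pi$ is divisible by $p$, so the corresponding factor is a $p$-th power in $G$ and its image in $DL(G)$ vanishes; moreover such a factor lies in $D_{p(b+ia)}$, a strictly deeper term of the filtration than degree $b+pa$ (as $p(b+ia)=pb+pia>b+pa$ since $b,a\ge 1$), so it would vanish on degree grounds in any case. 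The remaining correction terms in $C$ likewise land in strictly higher filtration degree. Comparing the homogeneous components of degree $b+pa$, only the term $i=p$ survives and we obtain $\mathrm{ad}\,\overline{x^p}(\bar y)=(\mathrm{ad}\,\bar x)^p(\bar y)$. As $\bar y$ was an arbitrary homogeneous element, the operator identity follows.

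Finally I would iterate. Using the identity for $x$ and then for $x^p$ in the role of $x$ gives $(\mathrm{ad}\,\bar x)^{p^2}=\bigl((\mathrm{ad}\,\bar x)^p\bigr)^p=(\mathrm{ad}\,\overline{x^p})^p=\mathrm{ad}\,\overline{x^{p^2}}$, and an easy induction yields $(\mathrm{ad}\,\bar x)^{p^t}=\mathrm{ad}\,\overline{x^{p^t}}$ for every $t$. If $x$ has order $p^t$ then $x^{p^t}=1$, whose image in $DL(G)$ is $0$; consequently $(\mathrm{ad}\,\bar x)^{p^t}=0$, that is, $\bar x$ is ad-nilpotent of index at most $p^t$.

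The step I expect to be the main obstacle is the bookkeeping in the second paragraph: one must make the Hall--Petrescu expansion precise enough to be certain that every factor other than the $i=p$ term vanishes in the degree-$(b+pa)$ component of $DL(G)$, either because its exponent is divisible by $p$ or because it sits in a deeper subgroup of the filtration. An alternative that sidesteps the explicit collection formula is to realize the bracket of $DL(G)$ as the commutator in the associated graded of the group algebra $\mathbb F_p[G]$ and invoke the standard associative identity $(\mathrm{ad}\,a)^p=\mathrm{ad}\,(a^p)$ valid in characteristic $p$; this, however, merely shifts the difficulty to identifying the associative $p$-th power with $\overline{x^p}$, which rests on the same congruence.
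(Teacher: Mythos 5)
The paper itself gives no argument here: Lemma~\ref{3.6} is quoted from Lazard with a bare citation, so your attempt can only be measured against the standard proofs in the literature. Your reduction to homogeneous $\bar y$, your disposal of the factors $[y,\underbrace{x,\dots,x}_{i}]^{\binom pi}$ for $1\le i\le p-1$, and the final iteration to get ad-nilpotency of index $p^t$ are all fine. The genuine gap is exactly where you suspect it, and it does not close as written: the claim that the correction terms in $C$ ``land in strictly higher filtration degree'' is false as a pure degree count. A correction commutator with $u\ge 2$ entries equal to $y$ and $v$ entries equal to $x$ lies in $D_{ub+va}$, and $ub+va>b+pa$ amounts to $(u-1)b>(p-v)a$; already for $u=2$, $v=3$, $a=b=1$, $p=5$ this fails (degree $5$ against target degree $6$), and commutators such as $[[y,x,x],[y,x]]$ genuinely arise in collecting $[y,x^p]$. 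Worse, a factor that is not even known to lie in $D_{b+pa}$ cannot simply be ``projected to the component of degree $b+pa$'': what must be proved is that the product of all correction terms lies in $D_{b+pa+1}$, and that requires the finer Hall--Petrescu information that a correction term of weight $w$ occurs with exponent of the form $\sum_{i\le w}a_i\binom pi$ (hence divisible by $p$ when $w<p$), followed by a second degree count for its $p$-th power and a separate treatment of weights $\ge p$. None of this is in your write-up, so the central congruence is not established.

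The alternative you mention only in passing is in fact the standard complete proof, and your reason for dismissing it is not correct: it does not ``rest on the same congruence.'' By Jennings' theorem the subgroups $D_i$ coincide with the dimension subgroups $G\cap(1+\Delta^i)$, where $\Delta$ is the augmentation ideal of $\mathbb F_p[G]$, and $x\mapsto (x-1)+\Delta^{a+1}$ embeds $DL(G)$ as a Lie subalgebra of the associated graded algebra $\bigoplus_i\Delta^i/\Delta^{i+1}$. There the identification of $\overline{x^p}$ with the associative power $\bar x^{\,p}$ is the one-line computation $x^p-1=\bigl((x-1)+1\bigr)^p-1=(x-1)^p$ in characteristic $p$, and $(\mathrm{ad}\,a)^p=\mathrm{ad}\,(a^p)$ holds in any associative algebra of characteristic $p$ because left and right multiplications commute, so $(\mathrm{ad}\,a)^p=(L_a-R_a)^p=L_{a^p}-R_{a^p}$. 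I recommend you take that route; the only nontrivial ingredient is Jennings' theorem itself, not a collection formula.
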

Denote by $L_p(G)$ the subalgebra of $DL(G)$ generated by $K_1=D_1/D_2$.
The following lemma goes back to Lazard \cite{laz65}; in the present form it can be found, for example, in \cite{khushu}.

\begin{lemma}\label{4.9}
Suppose that $G$ is a $d$-generated finite $p$-group such that the
Lie algebra $L_p(G)$ is nilpotent of class $c$. Then $G$ has a
powerful characteristic subgroup of $(p,c,d)$-bounded index.
\end{lemma}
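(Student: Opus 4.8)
The plan is to translate the Lie-theoretic hypothesis into a commutator-versus-$p$-th-power relation in $G$, and then feed this into the structure theory of powerful $p$-groups. Since $L_p(G)$ is generated by $K_1=D_1/D_2$ and is nilpotent of class $c$, every Lie commutator of weight $c+1$ in elements of $K_1$ vanishes; unravelling the definition of the bracket in $DL(G)$, this says precisely that each group commutator of weight $c+1$ lies in $D_{c+2}$, i.e. $\gamma_{c+1}(G)\le D_{c+2}$. Now I would exploit the shape $D_{c+2}=\prod_{jp^k\ge c+2}\gamma_j(G)^{p^k}$: the factors with $k=0$ all lie in $\gamma_{c+2}(G)$, while every factor with $k\ge 1$ consists of $p$-th powers and hence lies in $G^p$. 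This yields $\gamma_{c+1}(G)\le \gamma_{c+2}(G)\,G^p$, and since the same reasoning gives $\gamma_{c+1+m}(G)\le\gamma_{c+2+m}(G)\,G^p$ for every $m\ge 0$, descending the lower central series forces $\gamma_{c+1}(G)\le G^p$ (for $p=2$ one tracks squares more carefully and reaches the analogous conclusion).

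The first payoff is that $G/G^p$ is a $d$-generated group of exponent dividing $p$ and nilpotency class at most $c$; such a group is a quotient of the free nilpotent-of-class-$c$ group of exponent $p$ on $d$ generators, which is finite, so $|G:G^p|$ is $(p,c,d)$-bounded. In particular $d(G)$ is $(p,c,d)$-bounded. To produce the powerful subgroup I would invoke the Lubotzky--Mann theory of powerful $p$-groups in the form: a finite $p$-group possesses a powerful characteristic subgroup whose index is bounded in terms of $p$ and the rank of the group. Thus it suffices to bound the rank $\mathrm{rk}(G)=\max_{H\le G}d(H)$ in terms of $p,c,d$. Here I would argue that the relation $\gamma_{c+1}(G)\le G^p$ is inherited strongly enough by the sections of $G$, and combine it with Lazard's Lemma \ref{3.6} --- which makes $DL(G)$ a restricted Lie algebra whose $p$-operation raises degrees by a factor $p$ --- to see that the ordinary-Lie part $L_p(G)$ is concentrated in degrees $\le c$ while the remainder of $DL(G)$ is governed entirely by $p$-th powers; this pins down the elementary abelian sections and forces every subgroup to be generated by a $(p,c,d)$-bounded number of elements.

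I expect the rank bound to be the main obstacle. The difficulty is that $DL(G)$ is far from having bounded dimension (abelian groups of unbounded order already have $L_p$ of class $1$), so one cannot simply count; the crux is to separate cleanly the bounded ``ordinary Lie'' contribution, which the hypothesis kills above weight $c$, from the ``$p$-power'' contribution, and to show that the latter cannot inflate the number of generators of a subgroup beyond a $(p,c,d)$-bounded amount. Once $\mathrm{rk}(G)$ is bounded, the cited powerful-group theorem supplies a powerful characteristic subgroup of $(p,c,d)$-bounded index, which is exactly the assertion of the lemma.
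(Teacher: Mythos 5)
First, a point of calibration: the paper does not prove Lemma \ref{4.9} at all --- it is quoted from \cite{khushu} (``in the present form it can be found, for example, in \cite{khushu}''), so there is no in-paper proof to compare you against and your attempt must stand on its own.

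The first half of your proposal is correct and does match the opening of the standard argument. Since $L_p(G)$ is generated by $K_1$ and is nilpotent of class $c$, every homogeneous component of degree greater than $c$ is spanned by vanishing commutators, so $\gamma_i(G)\le D_{i+1}$ for all $i>c$; feeding this into the shape of the $D_i$ gives $\gamma_{c+1}(G)\le G^p$ for $p$ odd, and hence $|G:G^p|$ is $(p,c,d)$-bounded because $G/G^p$ is a $d$-generated group of exponent $p$ and class at most $c$. All of that is fine.

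The genuine gap is exactly where you predict it: the rank bound. Knowing that $|G:G^p|$ is bounded and $\gamma_{c+1}(G)\le G^p$ does not yield a powerful characteristic subgroup of bounded index, since $G^p$ need not be powerful and iterating the construction requires controlling $d(G^p)$, $d((G^p)^p)$, and so on --- which is the rank problem again. The paragraph you offer in place of a proof (``inherited strongly enough by the sections'', ``pins down the elementary abelian sections and forces every subgroup to be generated by a bounded number of elements'') contains no argument: you never produce a bound on $d(H)$ for an arbitrary subgroup $H\le G$, and the difficulty is real because the Jennings filtration of a subgroup is not the restriction of that of $G$ (one only has $D_i(H)\le H\cap D_i(G)$, which points the wrong way for bounding $\dim H/\Phi(H)$). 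The argument in \cite{khushu} supplies precisely the missing quantitative input: $\dim L_p(G)\le d+d^2+\cdots+d^c$ because the algebra is $d$-generated of class $c$, and Lazard's identity $D_i=\gamma_i(G)\,\bigl(D_{\lceil i/p\rceil}\bigr)^p$ shows that every factor $K_i$ of the whole of $DL(G)$ is spanned by iterated $p$-th power images of the finitely many nonzero components $L_j$, $j\le c$; this is what makes the Lubotzky--Mann reduction to a powerful subgroup go through. Your sketch gestures at this (``the remainder of $DL(G)$ is governed by $p$-th powers'') but never converts it into a bound. Finally, the case $p=2$ is waved away with ``one tracks squares more carefully'', whereas powerfulness for $p=2$ requires $[G,G]\le G^4$ and is a genuinely different computation, not a routine adjustment.
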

Recall that powerful $p$-groups were introduced by Lubotzky and
Mann in \cite{lbmn}: a finite $p$-group $G$ is \textit{powerful}
if $G^p\geq [G,G]$ for $p\ne 2$ (or $G^4\geq [G,G]$ for $p=2$).
These groups have many nice properties, so that often a problem
becomes much easier once it is reduced to the case of powerful
$p$-groups. The above lemma is quite useful as it allows us to
perform such a reduction. Precisely, we will require the property
that if $G=\langle g_1,\dots,g_d\rangle$ is a powerful $p$-group
and $e=p^k$, then $G^e=\langle {g_1}^e,\dots,{g_d}^e\rangle$.  Using this it is not really difficult to prove Theorems \ref{main3}, \ref{main2} and \ref{main1} in the particular case
where $G$ is a powerful $p$-group. Thus, we see the general idea of proofs of the main results -- the reduction to the powerful $p$-groups will be performed via Lemma \ref{4.9} while the nilpotency of the corresponding Lie algebras will be established through Proposition \ref{lie}.

\section{Proof of Theorem \ref{main3}}

We start this section with citing some useful results about finite groups
admitting a fixed-point-free four-group of automorphisms. Without
further references we use the well-known fact that if a finite
group $A$ acts coprimely on a finite group $G$ and $N$ is a normal
$A$-invariant subgroup of $G$, then $C_{G/N}(A)=C_G(A)N/N$.

Let $G$ be a finite group and $V=\left\{1,v_1,v_2,v_3\right\}$ the non-cyclic group of order $4$ acting fixed-point-freely on $G$. Then $G$ has odd order. Put $G_i=C_G(v_i)$ for $i\in\{1,2,3\}$. The proofs of the next few lemmas can be found in \cite{Sh2}.

\begin{lemma}\label{112}
Each $G_i$ is abelian and if $i\neq j$ then $v_j$ acts on $G_i$ by
the rule $x^{v_j}=x^{-1}$ for each $x\in G_i$; $i, j \in \left\{1,
2, 3 \right\}$. It follows that every subgroup generated by a
subset of $G_1\cup G_2\cup G_3$ is $V$-invariant.
\end{lemma}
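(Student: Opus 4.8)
The plan is to reduce everything to the classical fact that a finite group admitting a fixed-point-free automorphism of order two is necessarily abelian, with that automorphism inverting every element. All three assertions of the lemma will then follow by keeping track of how the remaining involutions of $V$ act on each $G_i$.

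First I would fix an index $i$ and examine the action of $V$ on $G_i=C_G(v_i)$. Since $V$ is abelian, each $v_j$ commutes with $v_i$ and hence leaves $G_i$ invariant, so $V$ does act on $G_i$; moreover $v_i$ acts trivially on $G_i$ by definition. Now pick $j\neq i$. Because $i\neq j$ we have $\langle v_i,v_j\rangle=V$, and therefore
$$C_{G_i}(v_j)=C_G(v_i)\cap C_G(v_j)=C_G(V)=1.$$
Thus $v_j$ acts on $G_i$ as an automorphism of order at most two with no nontrivial fixed points. If $v_j$ acted trivially we would get $G_i\le C_G(v_j)$, whence $G_i\le C_G(V)=1$; so we may assume $G_i\neq 1$ and that $v_j$ is a genuine fixed-point-free involution of $G_i$.

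At this point I would invoke, or reprove in a line, the classical theorem: the map $x\mapsto x^{-1}x^{v_j}$ is injective on $G_i$ because $C_{G_i}(v_j)=1$, hence bijective by finiteness; applying $v_j$ to an element $x^{-1}x^{v_j}$ and using $v_j^2=1$ shows that $v_j$ sends it to its inverse. Consequently $v_j$ inverts every element of $G_i$, and since inversion is an automorphism only of an abelian group, $G_i$ is abelian. This yields simultaneously that each $G_i$ is abelian and that $x^{v_j}=x^{-1}$ for all $x\in G_i$ whenever $j\neq i$. For the two indices $j,j'$ distinct from $i$, the relation $v_jv_{j'}=v_i$ together with the triviality of $v_i$ on $G_i$ forces $v_j$ and $v_{j'}$ to act identically on $G_i$, so both invert every element consistently and no conflict arises.

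Finally, for the invariance statement, note that each $v_j$ sends an element $x\in G_i$ to $x$ (when $j=i$) or to $x^{-1}$ (when $j\neq i$), in either case into $\langle x\rangle$. Hence if $H$ is generated by a subset $S\subseteq G_1\cup G_2\cup G_3$, every $v_j$ maps $S$ into $H$ and therefore maps $H=\langle S\rangle$ into itself; being an automorphism of finite order it restricts to an automorphism of $H$, so $H$ is $V$-invariant. The only substantive ingredient here is the classical fixed-point-free involution theorem; everything else is bookkeeping about the four-group $V$, so I do not anticipate a serious obstacle.
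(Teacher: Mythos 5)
Your proof is correct and is essentially the standard argument: the paper itself gives no proof of this lemma, referring instead to \cite{Sh2}, where the same reduction to the classical fixed-point-free involution theorem (via $C_{G_i}(v_j)=C_G(V)=1$ and the bijectivity of $x\mapsto x^{-1}x^{v_j}$) is used. Nothing to add.
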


\begin{lemma}\label{113}
Let $x$ be an element of $G$ such that $x^{v_i}=x^{-1}$ for some
$i$. Suppose that $x\in S$, where $S$ is some $V$-invariant
subgroup of $G$. Then there exists a unique pair of elements $y\in
G_j\cap S$ and $t\in G_k\cap S$ such that $x=yty$ and
$\{i,j,k\}=\{1,2,3\}$.
\end{lemma}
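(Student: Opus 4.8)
The plan is to reduce everything to extracting a single square root inside $S$, exploiting that $G$, and hence the $V$-invariant subgroup $S$, has odd order, so that the map $s\mapsto s^{2}$ is a bijection on $S$ and on each abelian piece $G_i\cap S$ (surjectivity is clear since $s=(s^{(|s|+1)/2})^{2}$, and $S$ is finite). After relabelling the involutions of $V$ I may assume $i=1$, so $x^{v_1}=x^{-1}$, and I look for $y\in G_2\cap S$ and $t\in G_3\cap S$ with $x=yty$ (the complementary assignment $y\in G_3$, $t\in G_2$ being entirely symmetric). The first step is to note that once $y\in G_2\cap S$ is chosen, the element $t:=y^{-1}xy^{-1}$ automatically lies in $S$ and gives $x=yty$, so the only thing to arrange is $t^{v_3}=t$. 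Using $y^{v_3}=y^{-1}$ and $x^{v_3}=(x^{v_2})^{-1}$, both immediate from Lemma \ref{112} and $v_3=v_1v_2$, this requirement rewrites as $u\,(x^{v_2})^{-1}\,u=x$ with $u=y^{2}$. Since squaring is bijective on the abelian group $G_2\cap S$, finding $y$ is the same as finding $u\in G_2\cap S$ solving this equation.

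Writing $a=x$ and $b=x^{v_2}$, the core step is to solve $u\,b^{-1}\,u=a$ for $u\in G_2\cap S$. I would introduce $c:=b^{-1}a\in S$ and observe that $b^{v_2}=a$, whence $c^{v_2}=a^{-1}b=c^{-1}$, so $c$ is inverted by $v_2$. As $|S|$ is odd, $c$ has a unique square root $v\in S$, and uniqueness forces $v^{v_2}=v^{-1}$, because $v^{v_2}$ and $v^{-1}$ are both square roots of $c^{v_2}=c^{-1}$. Now $u:=bv$ does the job: a direct check gives $u\,b^{-1}\,u=bv^{2}=bc=a$, while $u^{v_2}=b^{v_2}v^{v_2}=av^{-1}=bv=u$ (using $bc=a$), so $u\in G_2\cap S$. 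Taking the square root $y\in G_2\cap S$ of $u$ and setting $t=y^{-1}xy^{-1}$ then produces the factorisation. Uniqueness runs the reduction backwards: if $x=y_1t_1y_1=y_2t_2y_2$ with $y_\ell\in G_2\cap S$ and $t_\ell\in G_3\cap S$, then each $u_\ell=y_\ell^{2}$ solves $u_\ell\,b^{-1}\,u_\ell=a$, so each $b^{-1}u_\ell$ is a square root of $c$ in $S$; uniqueness of square roots forces $u_1=u_2$, hence $y_1=y_2$, and then $t_1=t_2$.

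The point where I expect the real difficulty, and which dictates the whole strategy, is that the pieces $G_i\cap S$ only \emph{anti-commute} pairwise while $x$ is a completely general element, so one cannot naively extract square roots in a commutative environment. The device that resolves this is the element $c=(x^{v_2})^{-1}x$: it repackages the problem as the extraction of an honest square root of a single element that is inverted by $v_2$, and it is precisely the odd-order hypothesis that both guarantees this root and pins it down uniquely, which is in turn exactly what drives the resulting $u$ back into $G_2$.
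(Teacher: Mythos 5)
Your argument is correct; I checked each step. The reduction $t:=y^{-1}xy^{-1}$, the computation $t^{v_3}=y\,x^{v_3}\,y=y\,(x^{v_2})^{-1}y$ turning the condition $t\in G_3$ into $u\,b^{-1}u=a$ with $u=y^2$, the verification that $u=bv$ (with $v$ the unique square root of $c=b^{-1}a$) satisfies both $u\,b^{-1}u=bv^2=a$ and $u^{v_2}=av^{-1}=bv=u$, and the uniqueness run in reverse via $(b^{-1}u_\ell)^2=c$ are all sound; the only external inputs are Lemma \ref{112} and the fact that squaring is a bijection on a finite group of odd order (with the square root of $s$ lying in $\langle s\rangle$, hence in $S$ and in $G_2\cap S$ where needed). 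Note, however, that the paper itself contains no proof of this lemma: it is quoted from \cite{Sh2} with the remark that ``the proofs of the next few lemmas can be found in \cite{Sh2}.'' So there is nothing in the text to compare against line by line; what you have produced is an independent, self-contained, constructive proof. The usual treatments of this decomposition in the literature tend to proceed either by induction on $|S|$ or by a counting argument (the set of elements of $S$ inverted by $v_i$ has cardinality $|S:C_S(v_i)|$, and one matches this against the injective map $(y,t)\mapsto yty$), whereas your proof exhibits $y$ and $t$ explicitly as iterated square roots; this buys an algorithmic construction and makes the uniqueness entirely transparent, at the modest cost of the slightly fiddly bookkeeping with $a$, $b$, $c$, $u$, $v$. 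One small presentational point: the relabelling ``assume $i=1$'' together with the choice $y\in G_2$, $t\in G_3$ silently fixes which of the two remaining indices plays the role of $j$; as you observe, the other ordering is handled symmetrically, and it would be worth one sentence making explicit that the uniqueness asserted in the statement is for a fixed ordered pair $(j,k)$.
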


\begin{lemma}\label{114} $G=G_1G_2G_3$.\end{lemma}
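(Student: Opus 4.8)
The plan is to establish the set identity $G=G_1G_2G_3$ by a counting argument. First note that for $i\neq j$ we have $G_i\cap G_j\subseteq C_G(v_i)\cap C_G(v_j)=C_G(V)=1$, since $\langle v_i,v_j\rangle=V$. Consider the multiplication map $\mu\colon G_1\times G_2\times G_3\to G$ sending $(a,b,c)$ to $abc$. I would reduce the lemma to two assertions: (i) $|G|=|G_1|\,|G_2|\,|G_3|$, and (ii) $\mu$ is injective. Indeed, once (i) holds the domain and codomain of $\mu$ have the same (finite) cardinality, so the injectivity in (ii) forces $\mu$ to be surjective, which is exactly the desired equality $G=G_1G_2G_3$.

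For (i) I would argue by induction on $|G|$. Since $G$ has odd order it is solvable, so if $G\neq1$ we may choose a minimal $V$-invariant normal subgroup $N$; a standard argument (passing to $[N,N]$, then to the $p$-part, then to $N^p$, each being characteristic and $V$-invariant) shows that minimality makes $N$ elementary abelian. Put $N_i=C_N(v_i)=N\cap G_i$. Applying the induction hypothesis to the smaller groups $N$ and $G/N$ (both of which inherit the hypotheses, with $C_{G/N}(V)=1$ by coprimeness) gives $|N|=|N_1|\,|N_2|\,|N_3|$ and $|G/N|=\prod_i|C_{G/N}(v_i)|$. Using the standard coprime relation $C_{G/N}(v_i)=G_iN/N$ we obtain $|C_{G/N}(v_i)|=|G_i|/|N_i|$, and multiplying the two identities yields $|G|=|G/N|\,|N|=\prod_i|G_i|$, completing the induction. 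The base case of an elementary abelian $G$ is immediate from the eigenspace decomposition $G=G_1\oplus G_2\oplus G_3$.

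Assertion (ii) is the technical heart, and I would again induct on $|G|$ with the same $N$. Suppose $abc=a'b'c'$ with $a,a'\in G_1$, $b,b'\in G_2$, $c,c'\in G_3$. By induction the corresponding map for $G/N$ is injective, so passing to $G/N$ forces $a^{-1}a'\in N\cap G_1=N_1$, and similarly $b^{-1}b'\in N_2$ and $c^{-1}c'\in N_3$; write $a'=an_1$, $b'=bn_2$, $c'=cn_3$ with $n_i\in N_i$. Substituting and cancelling reduces everything to a single relation inside the abelian group $N$. The crucial structural features are that $N$ is the internal direct product $N_1\times N_2\times N_3$ of the $v_i$-eigenspaces, and that each $G_i$ centralizes $v_i$ and hence, acting by conjugation, preserves both the $(+1)$-eigenspace $N_i$ and the $(-1)$-eigenspace $N_jN_k$. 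Comparing components in this decomposition first yields $n_2=1$, and then a short manipulation produces a relation of the form $b^2 r b^{-2}=r^{-1}$ for a suitable $r\in N_3$. Since $|G|$ is odd we have $\langle b^4\rangle=\langle b\rangle$, so $b$ already centralizes $r$; this forces $r^2=1$ and hence $r=1$, from which $n_1=n_3=1$ follows. Thus $(a,b,c)=(a',b',c')$.

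I expect the main obstacle to be precisely assertion (ii): the difficulty is that $G_1G_2G_3$ is only a product of subsets, with no homomorphism present, so there is no algebraic shortcut to injectivity and one must carefully control how conjugation by an element of one $G_i$ mixes the eigenspaces of $N$. Here Lemma \ref{112} (the inversion action of $v_j$ on $G_i$, and the resulting $V$-invariance of the relevant subgroups) and Lemma \ref{113} (the factorization $x=yty$ of elements inverted by some $v_i$) are exactly the tools that make the component bookkeeping and the final odd-order argument go through.
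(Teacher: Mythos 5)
Your proof is correct, but note that the paper itself offers no argument for this lemma at all: it is quoted from \cite{Sh2} together with Lemmas \ref{112}, \ref{113} and \ref{115}, so there is no in-paper proof to match against. Your route --- establishing (i) the counting identity $|G|=|G_1|\,|G_2|\,|G_3|$ (which is the Brauer--Wielandt formula specialized to $C_G(V)=1$) and (ii) injectivity of the multiplication map, both by induction over a minimal $V$-invariant normal subgroup $N$, and then concluding surjectivity by cardinality --- is a legitimate self-contained derivation. I checked the delicate step of (ii): after reducing to $b^{-1}n_1^{-1}b=n_2\cdot(cn_3c^{-1})$ and comparing components in $N=N_1\oplus N_2\oplus N_3$ (using that $G_2$ preserves $N_2$ and $N_1\oplus N_3$, and $G_3$ preserves $N_3$ and $N_1\oplus N_2$), one gets $n_2=1$ and $brb^{-1}=n_1^{-1}$ with $r=cn_3c^{-1}\in N_3$; applying $v_1$, which fixes $n_1$ and inverts both $b$ and $r$ by Lemma \ref{112}, indeed yields $b^2rb^{-2}=r^{-1}$, and the odd-order argument via $\langle b^4\rangle=\langle b\rangle$ kills $r$. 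Two small remarks: Lemma \ref{113} is advertised at the end as a key tool but is never actually used in your argument (Lemma \ref{112} and the eigenspace decomposition of $N$ carry all the weight); and the solubility you need to extract an elementary abelian minimal $V$-invariant normal subgroup rests on the odd-order theorem, which is consistent with the paper's standing assertion that $G$ has odd order but should be acknowledged as the (heavy) input it is. What your approach buys over the usual direct inductive proof of surjectivity is that you avoid the awkward absorption of $N$ into the product $G_1G_2G_3$, at the price of having to prove the a priori stronger uniqueness statement; since uniqueness follows anyway once the lemma and the counting formula are known, nothing is wasted.
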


\begin{lemma}\label{115} $G'=\langle G_1,G_2\rangle\cap\langle G_2,G_3\rangle\cap\langle G_3,G_1\rangle$. In particular, the subgroups $\langle G_i, G_j\rangle$
are normal and contain $G'$.
\end{lemma}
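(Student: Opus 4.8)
The plan is to prove the two inclusions of the displayed equality separately and then read off normality and the containment of $G'$ as formal consequences. Throughout I will use that $|G|$ is odd, so $V$ acts coprimely, together with Lemmas \ref{112}--\ref{114}. Write $G_i=C_G(v_i)$ and, for a pair $i\neq j$, $H_{ij}=\langle G_i,G_j\rangle$, which is $V$-invariant by Lemma \ref{112}.

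For the inclusion $\langle G_1,G_2\rangle\cap\langle G_2,G_3\rangle\cap\langle G_3,G_1\rangle\leq G'$ I would pass to the abelianization $\bar G=G/G'$. Since $G'$ is characteristic and the action is coprime, $C_{\bar G}(V)=C_G(V)G'/G'=1$ and $C_{\bar G}(v_i)=G_iG'/G'=:\bar G_i$. A coprime Klein four-group acting on an abelian group of odd order is the internal direct product of its three nontrivial $\pm1$-eigenspaces, the eigenspace for the trivial character being $C_{\bar G}(V)=1$; here these three eigenspaces are exactly $\bar G_1,\bar G_2,\bar G_3$, so $\bar G=\bar G_1\times\bar G_2\times\bar G_3$. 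The image of $H_{ij}$ is $\bar G_i\bar G_j$, and the three subgroups $\bar G_1\bar G_2$, $\bar G_2\bar G_3$, $\bar G_3\bar G_1$ intersect trivially. Hence any element of the triple intersection maps to $1$ in $\bar G$, i.e. lies in $G'$.

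The reverse inclusion reduces to showing each $H_{ij}$ is normal, and this is the step I expect to be the main obstacle. Writing $N=H_{12}$, which satisfies $G=NG_3$ by Lemma \ref{114}, it suffices to check that $G_3$ normalizes $N$. The key observation is that for $a\in G_1$ (or $a\in G_2$) and $c\in G_3$ the conjugate $a^c$ is inverted by $v_3$: indeed $v_3$ fixes $c$ and inverts $a$, so $(a^c)^{v_3}=(a^{-1})^c=(a^c)^{-1}$. Applying Lemma \ref{113} with $i=3$ and $S=G$ then writes $a^c=yty$ with $y\in G_1$, $t\in G_2$, so $a^c\in N$. Therefore $G_1^{\,c},G_2^{\,c}\subseteq N$, whence $N^c\subseteq N$ and, by finiteness, $N^c=N$; since $G=G_1G_2G_3$ and $G_1,G_2\leq N$, this gives $N\trianglelefteq G$. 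The same argument applied to the other index pairs (replacing $v_3$ by the involution $v_k$ fixing the conjugating factor) handles $\langle G_2,G_3\rangle$ and $\langle G_3,G_1\rangle$.

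Once normality is in hand the rest is formal. For each pair, $G/H_{ij}$ is generated by the image of the abelian group $G_k$, hence is abelian, so $G'\leq H_{ij}$; intersecting over the three pairs yields $G'\leq\langle G_1,G_2\rangle\cap\langle G_2,G_3\rangle\cap\langle G_3,G_1\rangle$. Combined with the first inclusion this proves the equality, and the ``in particular'' clause has already been established along the way: each $\langle G_i,G_j\rangle$ is normal and contains $G'$.
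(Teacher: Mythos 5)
The paper does not actually prove this lemma: it is one of the statements imported wholesale from \cite{Sh2} (``The proofs of the next few lemmas can be found in \cite{Sh2}''), so there is no in-text argument to compare against. Judged on its own, your proof is correct and complete, and it uses only the quoted Lemmas \ref{112}--\ref{114} plus the coprime centralizer-of-quotient fact that the paper states at the start of Section 4. Both halves check out: since $|G|$ is odd and $C_{G/G'}(V)=1$, the abelianization does decompose as $\bar G_1\times\bar G_2\times\bar G_3$ with $\bar G_i=C_{\bar G}(v_i)=G_iG'/G'$, and the three pairwise products intersect trivially, giving $\langle G_1,G_2\rangle\cap\langle G_2,G_3\rangle\cap\langle G_3,G_1\rangle\leq G'$. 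For normality, the computation $(a^c)^{v_k}=(a^{-1})^c=(a^c)^{-1}$ for $c\in G_k$ and $a\in G_i\cup G_j$, followed by Lemma \ref{113}, correctly places $a^c$ in $\langle G_i,G_j\rangle$, and with $G=G_1G_2G_3$ this yields normality of each $\langle G_i,G_j\rangle$; the containment $G'\leq\langle G_i,G_j\rangle$ then follows because the quotient is generated by the image of the abelian subgroup $G_k$. This is the natural argument and almost certainly the one in \cite{Sh2}; no gaps.
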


For any $x\in G_i$ and $y\in G_j$ with $i\neq j$ it is
clear that $v_i$ sends $y^x$ to the inverse. Therefore Lemma
\ref{113} guarantees that there exists a unique pair $(s,t)\in G_k
\times G_j$ such that $y^x=sts$ and
$\left\{i,j,k\right\}=\left\{1,2,3\right\}$. Thus we can define
$x*y=s$. According to the same lemma, if $y^x$ is an element of a
$V$-invariant subgroup $S$, then $x*y\in S$. The next lemma is
taken from \cite{withcarmela}.

\begin{lemma}\label{com1}
Let $x\in G_i,y\in G_j$ and $H=\langle x,y\rangle$. Then $\langle
(x*y)^H\rangle=H'$.
\end{lemma}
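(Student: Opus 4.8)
The plan is to prove the two inclusions $\langle(x*y)^H\rangle\subseteq H'$ and $H'\subseteq\langle(x*y)^H\rangle$ separately, writing $s=x*y$ and recalling that $s\in G_k$ and $t\in G_j$ are the elements with $y^x=sts$, where $\{i,j,k\}=\{1,2,3\}$. First I would record that $H=\langle x,y\rangle$ is $V$-invariant by Lemma \ref{112}, that $y^x$ lies in the $V$-invariant subgroup $H$, and hence that both $s$ and $t$ belong to $H$. The computational heart of the argument is to extract two further conjugation identities from $y^x=sts$ by applying the automorphisms $v_j$ and $v_k$. Using that $v_j$ fixes $y$ and $t$ while inverting $x$ and $s$, and that $v_k$ fixes $s$ while inverting $x,y,t$, I obtain
$$y^{x^{-1}}=s^{-1}ts^{-1}\qquad\text{and}\qquad (y^{-1})^{x^{-1}}=st^{-1}s.$$

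For the first inclusion it suffices to show $s\in H'$. Passing to the abelianization $H/H'$, where conjugation is trivial and all elements commute, the relation $y^x=sts$ gives $\bar y=\bar s^{2}\bar t$, while $(y^{-1})^{x^{-1}}=st^{-1}s$ gives $\bar y^{-1}=\bar s^{2}\bar t^{-1}$. Multiplying these two yields $\bar s^{4}=1$; since $G$, and therefore $H/H'$, has odd order, this forces $\bar s=1$, that is $s\in H'$. Consequently $\langle s^H\rangle\subseteq H'$.

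The reverse inclusion carries the real content. I would set $N=\langle s^H\rangle$ and work in the odd-order quotient $\bar H=H/N$, in which $\bar s=1$. Reducing $y^x=sts$ and $y^{x^{-1}}=s^{-1}ts^{-1}$ modulo $N$ gives $\bar y^{\bar x}=\bar t=\bar y^{\bar x^{-1}}$, so that $\bar x^{2}$ centralizes $\bar y$, i.e. $[\bar y,\bar x^{2}]=1$. Here I would invoke the odd order of $\bar H$: it makes $\bar x$ of odd order, so $\langle\bar x^{2}\rangle=\langle\bar x\rangle$ and hence $\bar x$ itself centralizes $\bar y$. Therefore $\bar H=\langle\bar x,\bar y\rangle$ is abelian, which means $H'\subseteq N$. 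Combining the two inclusions gives $\langle(x*y)^H\rangle=H'$.

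I expect the main obstacle to be this reverse inclusion, and specifically the observation that reducing the two conjugation identities modulo $N$ collapses them to $\bar y^{\bar x}=\bar y^{\bar x^{-1}}$; the passage from $[\bar y,\bar x^{2}]=1$ to commutativity is then a clean consequence of $|G|$ being odd. Deriving the auxiliary identities correctly, by tracking via Lemma \ref{112} which of $v_j,v_k$ fix or invert each of $s,t,x,y$, is the only other place where care is needed.
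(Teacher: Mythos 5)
Your proof is correct. The paper itself gives no argument for Lemma \ref{com1} --- it is simply imported from \cite{withcarmela} --- so there is no internal proof to compare against; judged on its own, your argument is complete and uses exactly the right ingredients. The two auxiliary identities are derived correctly: $v_j$ fixes $y,t\in G_j$ and inverts $x\in G_i$, $s\in G_k$, giving $y^{x^{-1}}=s^{-1}ts^{-1}$, while $v_k$ fixes $s$ and inverts $x,y,t$, giving $(y^{-1})^{x^{-1}}=st^{-1}s$. The forward inclusion via $\bar s^{4}=1$ in the odd-order abelianization, and the reverse inclusion via $\bar y^{\bar x}=\bar y^{\bar x^{-1}}$ in $H/\langle (x*y)^H\rangle$ together with $\langle \bar x^{2}\rangle=\langle\bar x\rangle$, are both sound; the only prerequisites are that $H$ is $V$-invariant (Lemma \ref{112}) and that $s,t\in H$ (Lemma \ref{113} applied with $S=H$), both of which you record. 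This is essentially the standard argument for this statement.
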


 Let us define $$R_1=\langle a*b\ |a\in G_2,b\in G_3\rangle,$$
$$R_2=\langle a*b\ |a\in G_1,b\in G_3\rangle,$$
$$R_3=\langle a*b\ |a\in G_1,b\in G_2\rangle$$ and
$$T_1=\langle b*a\ |a\in G_2,b\in G_3\rangle,$$
$$T_2=\langle b*a\ |a\in G_1,b\in G_3\rangle,$$
$$T_3=\langle b*a\ |a\in G_1,b\in G_2\rangle.$$

\begin{lemma}\label{symmetr} If $G$ is nilpotent, we have $R_i=T_i$ for $i=1,2,3$.
\end{lemma}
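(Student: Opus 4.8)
The plan is to fix an index, say $i=1$, to set $W=\langle G_2,G_3\rangle$, and to prove the stronger equalities $R_1=W'\cap G_1=T_1$; the cases $i=2,3$ are identical after relabelling. By Lemma \ref{112} the subgroup $W$ is $V$-invariant, and it is nilpotent because $G$ is. Since $a*b\in G_1$ whenever $a\in G_2$, $b\in G_3$, and since $b^a\in W$ forces $a*b\in W$ (the observation preceding Lemma \ref{com1}), both $R_1$ and $T_1$ are subgroups of $W\cap G_1$; as $v_1$ fixes $G_1$ pointwise while $v_2,v_3$ invert it, they are in fact $V$-invariant.

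First I would establish the upper bound $R_1,T_1\le W'\cap G_1$ by passing to the abelianization $\overline W=W/W'$. Because $G$ has odd order and $V$ acts coprimely and fixed-point-freely, $\overline W$ is the internal direct product of the centralizers $C_{\overline W}(v_1)=\overline{W\cap G_1}$, $C_{\overline W}(v_2)=\overline{G_2}$ and $C_{\overline W}(v_3)=\overline{G_3}$. Fix $a\in G_2$, $b\in G_3$ and write $b^a=sts$ with $s=a*b\in W\cap G_1$ and $t\in G_3$. In $\overline W$ conjugation is trivial, so $\overline{s}^{\,2}\,\overline{t}=\overline b$; comparing $C_{\overline W}(v_1)$-components gives $\overline{s}^{\,2}=\overline 1$, whence $\overline s=\overline 1$ since $\overline W$ has odd order. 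Thus $a*b\in W'$, and the same computation applied to $a^b=s't's'$ gives $b*a\in W'$. Hence $R_1,T_1\le W'\cap G_1$. I would also record the normal-closure identity $W'=\langle R_1\rangle^{W}=\langle T_1\rangle^{W}$: as $G_2,G_3$ are abelian, $W'$ is generated as a normal subgroup of $W$ by the commutators $[a,b]$ with $a\in G_2$, $b\in G_3$, and by Lemma \ref{com1} each such $[a,b]$ lies in $\langle(a*b)^{H}\rangle\le\langle R_1\rangle^{W}$ for $H=\langle a,b\rangle$; combined with $R_1\le W'$ this gives the claim, and symmetrically for $T_1$.

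It then remains to prove the lower bound $W'\cap G_1\le R_1$ (the inclusion $W'\cap G_1\le T_1$ being symmetric), and here I would induct on the nilpotency class $c$ of $W$, the case $c=1$ being trivial. Writing $\gamma=\gamma_c(W)$, the operation $*$ is compatible with the projection $W\to W/\gamma$ (the images of the $G_i$ are the corresponding centralizers by coprimeness, and the decomposition of Lemma \ref{113} passes to the quotient), so the image of $R_1$ is its analogue computed in $W/\gamma$. The inductive hypothesis gives $\overline{R_1}=\overline{W'\cap G_1}$ in $W/\gamma$, that is $R_1\gamma=(W'\cap G_1)\gamma$; since $R_1,W'\cap G_1\le G_1$ and $G_1$ is abelian, any $g\in W'\cap G_1$ can then be written $g=rz$ with $r\in R_1$ and $z=r^{-1}g\in\gamma\cap G_1$. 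Thus $W'\cap G_1\le R_1(\gamma\cap G_1)$, and the whole statement reduces to the single inclusion $\gamma\cap G_1\le R_1$ for the top central layer.

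This last inclusion is where I expect the real work to lie: elements of $\gamma\cap G_1$ are $v_1$-fixed deep central commutators, and one must realize them as products of elements $a*b$, even though $a*b$ is built only from weight-two data. The mechanism is already visible when $c=2$. There $(b^a)^{v_2}=(b^a)^{-1}$ forces the $G_2$-component of $[a,b]$ to vanish, and a short computation with Lemma \ref{113} yields $a*b=\pi_1([a,b])^{-1/2}$, where $\pi_1$ is the projection onto $G_1$ and the square root is taken in the odd-order abelian group $W'\cap G_1$; as $a,b$ vary the elements $\pi_1([a,b])$ generate $W'\cap G_1$, so their square roots generate it as well, giving $R_1=W'\cap G_1$. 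The hard part will be to propagate this square-root description of $a*b$ along the lower central series so that the induction actually reaches $\gamma\cap G_1$, controlling the higher-weight corrections to $\pi_1([a,b])^{-1/2}$; I would regard verifying that the top central layer is captured as the crux of the argument. Once $W'\cap G_1\le R_1$ and $W'\cap G_1\le T_1$ are in hand, combining with the upper bound yields $R_1=W'\cap G_1=T_1$, as required.
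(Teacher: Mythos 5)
Your proposal is not a complete proof: the decisive inclusion is explicitly left open. You correctly prove the upper bound $R_1,T_1\le W'\cap G_1$ via the abelianization of $W=\langle G_2,G_3\rangle$, and your reduction of the lower bound $W'\cap G_1\le R_1$ to the single inclusion $\gamma_c(W)\cap G_1\le R_1$ is sound (the $*$ operation does pass to $V$-invariant quotients by the uniqueness in Lemma \ref{113}). But you then concede that capturing the top central layer --- ``propagating the square-root description of $a*b$ along the lower central series'' --- is ``the crux of the argument'', and you do not carry it out beyond class $2$. That step is precisely where the content of the lemma lies: for class at least $3$ the elements of $\gamma_c(W)\cap G_1$ are deep commutators, and there is no a priori reason they should be products of the weight-two data $a*b$; indeed the closely related statement $G'\cap G_i=R_i$ is the separate Lemma \ref{commu22} of the paper, whose proof itself relies on Lemma \ref{symmetr}. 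So the argument, as written, has a genuine gap exactly at its announced crux.

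For comparison, the paper avoids the identity $R_i=W'\cap G_i$ altogether. It takes a minimal counterexample and uses the inductive equality $R_iN=T_iN$ for every nontrivial normal $V$-invariant subgroup $N$. If $Z(G)$ contains a nontrivial element $a*b$, then $N=\langle a*b\rangle$ is central, so Lemma \ref{com1} forces $N=\langle a,b\rangle'$ to contain both $a*b$ and $b*a$, whence $N\le R_i\cap T_i$ and $R_i=T_i$. If not, then for $a\in G_i$ and $b\in Z_2(G)\cap G_j$ the commutator $[a,b]$ is central, so $a*b\in Z(G)$ by Lemma \ref{com1}, hence $a*b=1$ and $[a,b]=1$; this yields $Z_2(G)=Z(G)$, so $G$ is abelian, a contradiction. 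You might be able to salvage your approach by replacing the unproven top-layer computation with a similar second-centre argument, but as it stands the proposal does not prove the lemma.
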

\begin{proof} Let $G$ be a counterexample of minimal order. If $N$ is any normal $V$-invariant subgroup of $G$, by induction we have $T_iN=R_iN$. Let $Z=Z(G)$. If $Z$ contains a nontrivial element of the form $a*b$ for some $a\in G_i,b\in G_j$, we put $N=\langle a*b\rangle$. It is clear that $N$ is a normal $V$-invariant subgroup contained either in some $T_i$ or in some $R_i$. Since it is central, Lemma \ref{com1} shows that actually $N$ contains both $a*b$ and $b*a$. Hence $N\leq R_i\cap T_i$ and so $R_i=T_i$.

We therefore assume that $Z$ contains no nontrivial elements of the form $a*b$. Let $K=Z_2(G)$ be the second term of the upper central series of $G$ and set $K_i=K\cap G_i$ for $i=1,2,3$. Choose arbitrarily $a\in G_i$ and $b\in K_j$. Then, because of Lemma \ref{com1}, it follows that $a*b\in Z(G)$. Therefore we conclude that $a*b=1$ and so by Lemma \ref{com1} $[a,b]=1$. This happens for every choice of $a\in G_i$ and $b\in K_j$. Hence $K=Z(G)$ and $G$ is abelian, a contradiction.
\end{proof}

We will also require the following result, whose proof is similar to that of Lemma \ref{symmetr}.

\begin{lemma}\label{commu22} If $G$ is nilpotent, then $G_i\cap G'=R_i$ for $i=1,2,3$.\end{lemma}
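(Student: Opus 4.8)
The plan is to mimic closely the structure of the proof of Lemma~\ref{symmetr}, working with a minimal counterexample $G$ and exploiting the centre to peel off central pieces. Recall that $R_i$ is generated by the elements $a*b$ with $a,b$ ranging over the two factors complementary to $G_i$, and by Lemma~\ref{com1} the element $a*b$ lies in the derived group of $\langle a,b\rangle$ and hence in $G'$; moreover $a*b\in G_i$ by construction. Thus the inclusion $R_i\leq G_i\cap G'$ is immediate, and the entire content of the statement is the reverse inclusion $G_i\cap G'\leq R_i$.

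First I would fix $i$ (say $i=1$) and pass to a minimal counterexample. For any nontrivial normal $V$-invariant subgroup $N$ the quotient $G/N$ satisfies the hypotheses, so by induction $(G_1\cap G')N/N=R_1 N/N$, where I use that the images of $G_1$ and of $G'$ in $G/N$ are $G_1 N/N$ and $(G/N)'$, and that the $*$-operation is compatible with passing to $V$-invariant quotients (as noted after Lemma~\ref{115}). The natural choice is to let $Z=Z(G)$ and split into two cases exactly as in Lemma~\ref{symmetr}. If $Z$ contains a nontrivial element of the form $a*b$, set $N=\langle a*b\rangle$: this is normal, $V$-invariant and contained in $G'$, so by induction $G_1\cap G'\leq R_1 N$, and since $N\leq R_i$ for the appropriate $i$ one should be able to absorb $N$ and close the argument. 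The more delicate case is when $Z$ contains no nontrivial element $a*b$; here I would again introduce $K=Z_2(G)$ and $K_j=K\cap G_j$, and use that for $a\in G_i,\,b\in K_j$ the element $a*b$ is central (via Lemma~\ref{com1}, since commutators of $K$-elements land in $Z$), forcing $a*b=1$ and hence $[a,b]=1$, which collapses $K$ into $Z$ and makes $G$ abelian --- a contradiction.

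The step I expect to be the main obstacle is handling the induction in the first case cleanly: showing that once $N=\langle a*b\rangle$ is factored out, an element of $G_1\cap G'$ whose image lies in $R_1 N/N$ can genuinely be written as a product of the generating elements $c*d$ of $R_1$ together with the central element $a*b$, \emph{and} that $a*b$ itself (or a suitable adjustment) is one of the admissible generators of $R_1$. The subtlety is that $a*b$ central belongs to some $R_i$ but a priori perhaps not to $R_1$ specifically, so one must be careful that the reduction feeds back into the correct component. I would resolve this by arguing, as in Lemma~\ref{symmetr}, that a central $a*b$ forces via Lemma~\ref{com1} that $\langle a*b\rangle$ contains $[a,b]$ and all its conjugates, pinning down exactly which component $G_i$ it sits in, so that when the relevant index equals the one under consideration the inductive hypothesis delivers the reverse inclusion, and when it does not the element was not an obstruction to begin with. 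Modulo this bookkeeping the argument is a routine adaptation of the preceding lemma.
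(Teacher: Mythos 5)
Your proposal follows the paper's proof essentially verbatim: a minimal counterexample, the case split on whether $Z(G)$ contains a nontrivial element of the form $a*b$ (factoring out $N=\langle a*b\rangle$ in the first case, and using $Z_2(G)$ together with Lemma~\ref{com1} to force $G$ abelian in the second), within the same inductive framework. The bookkeeping you flag in the first case is exactly what the paper disposes of by invoking Lemma~\ref{symmetr} (so that $N$ lands in the appropriate $R_k$ regardless of the order of $a$ and $b$) together with the observation that $R_1\leq G_1$ while $N\leq G_k$ and $G_1\cap G_k=1$ for $k\neq 1$, which is the resolution you sketch.
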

\begin{proof} Let $G$ be a counterexample of minimal order. Set $K=G'$ and $K_i=K\cap G'$ for $i=1,2,3$. If $N$ is any normal $V$-invariant subgroup of $G$, by induction we have $K_iN=R_iN$. Let $Z=Z(G)$. If $Z$ contains a nontrivial element of the form $a*b$ for some $a\in G_i,b\in G_j$, we put $N=\langle a*b\rangle$. It is clear that $N$ is a normal $V$-invariant subgroup. By Lemma \ref{symmetr} $N$ contained in $R_1\cup R_2\cup R_3$ and in view of the above assumption we get a contradiction. Thus, assume that $Z$ contains no nontrivial elements of the form $a*b$.

Let $T=Z_2(G)$ be the second term of the upper central series of $G$ and set $T_i=T\cap G_i$ for $i=1,2,3$. Choose arbitrarily $a\in G_i$ and $b\in T_j$. Then, because of Lemma \ref{com1}, it follows that $a*b\in Z(G)$. Therefore we conclude that $a*b=1$ and so by Lemma \ref{com1} $[a,b]=1$. This happens for every choice of $a\in G_i$ and $b\in T_j$. Hence $T=Z(G)$ and $G$ is abelian, a contradiction.
\end{proof}

Another useful fact is provided by the next lemma.
\begin{lemma}\label{normg3} Suppose that $N$ is a normal subgroup of $G$ such that $N\leq G_3$.
Then $N\leq Z(G)$.
\end{lemma}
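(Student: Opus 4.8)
The plan is to show that the normal subgroup $N\leq G_3$ is centralized by all of $G$. Since $G=G_1G_2G_3$ by Lemma \ref{114}, it suffices to prove that $N$ is centralized by each of $G_1$, $G_2$ and $G_3$. Centralization by $G_3$ is immediate: by Lemma \ref{112} the subgroup $G_3$ is abelian, and $N\leq G_3$, so $[N,G_3]=1$. The heart of the matter is therefore to show that $N$ is centralized by $G_1$ and by $G_2$; by the evident symmetry between $v_1$ and $v_2$ it is enough to treat one of them, say $G_1$.

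First I would exploit the action of $V$ on $N$. Since $N\leq G_3=C_G(v_3)$, the involutions $v_1$ and $v_2$ both invert every element of $N$: indeed by Lemma \ref{112}, for $n\in G_3$ and $i\neq 3$ we have $n^{v_i}=n^{-1}$. Now take any $x\in G_1$ and $n\in N$. Because $N$ is normal in $G$, the conjugate $n^x$ again lies in $N$, and I want to compare $n^x$ with $n$. The key computation is to apply $v_1$ to $n^x$: since $x\in G_1=C_G(v_1)$ we have $x^{v_1}=x$, while $n^{v_1}=n^{-1}$, so
$$ (n^x)^{v_1}=(n^{v_1})^{x^{v_1}}=(n^{-1})^{x}=(n^x)^{-1}. $$
Thus $v_1$ inverts the element $n^x\in N\leq G_3$, which is consistent and gives no contradiction by itself; the real leverage comes from writing $n^x=n\cdot[n,x]$ and observing that $[n,x]\in N$ as well, since $N$ is normal.

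The plan now is to pin down $[n,x]$ using Lemma \ref{113}. The element $n^x$ lies in the $V$-invariant subgroup $N$ and is inverted by $v_1$, so by Lemma \ref{113} there is a unique pair $y\in G_2\cap N$, $t\in G_3\cap N$ with $n^x=yty$. On the other hand $n\in G_3\cap N$ itself is such an expression with the $G_2$-component trivial; the force of the argument is to show the $G_2$-component of $n^x$ must vanish, so that $n^x\in G_3$ and hence $n^x$ and $n$ both lie in the abelian group $G_3$, whence $[n^x,n]$ relations propagate. The cleanest route is to argue that any element of $N$ lies in $G_3$: since $N\trianglelefteq G$ is $V$-invariant and contained in $G_3$, its $G_1$- and $G_2$-components under the decomposition associated to Lemma \ref{113} are themselves forced into $N\cap G_1$ and $N\cap G_2$, but conjugation by $G_1G_2G_3$ cannot move an element of $G_3\cap N$ outside $G_3$ without creating fixed points of $V$, contradicting $C_G(V)=1$.

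The main obstacle I anticipate is making the last step rigorous, namely ruling out that conjugation by elements of $G_1$ produces components of $n^x$ outside $G_3$. The temptation is to imitate the minimal-counterexample technique used in Lemmas \ref{symmetr} and \ref{commu22}: take $G$ a counterexample of least order, pass to quotients by normal $V$-invariant subgroups on which the statement holds by induction, and extract a central element of the offending form to derive a contradiction with $C_G(V)=1$. Concretely, I would reduce to the case where $N$ is a minimal normal $V$-invariant subgroup, hence (as $G$ has odd order and is a $V$-group) an elementary abelian $p$-group on which $V$ and the conjugation action of $G$ both operate, and then use that $v_1,v_2$ invert $N$ while $v_3$ fixes it to show that $[G,N]$ is simultaneously fixed and inverted by $V$, forcing $[G,N]\leq C_N(V)=1$. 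That coprime-representation argument, rather than the explicit Lemma \ref{113} bookkeeping, is where the essential content lies, and it is the part I would expect to require the most care.
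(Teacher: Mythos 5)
There is a genuine gap. Your opening framework is right (reduce to showing $[G_i,N]=1$ for $i=1,2$, since $G_3$ is abelian and $G=G_1G_2G_3$), and you assemble the correct ingredients, but you stop one step short of the actual argument and then substitute a fallback that does not work. The paper's proof is very short: for $a\in G_1$ and $b\in N$, normality gives $b^a\in N\leq G_3$, so the \emph{uniqueness} in Lemma \ref{113} forces the decomposition $b^a=sts$ to be $s=1$, $t=b^a$, i.e.\ $a*b=1$; then Lemma \ref{com1} gives $\langle a,b\rangle'=\langle (a*b)^{\langle a,b\rangle}\rangle=1$, hence $[a,b]=1$. You write down exactly the decomposition $n^x=yty$ with $y\in G_2\cap N$ and say ``the force of the argument is to show the $G_2$-component must vanish,'' treating this as the main obstacle --- but it is immediate, because $n^x\in N\leq G_3$ by hypothesis and normality, so uniqueness already gives $y=x*n=1$. (Your remark that ``the cleanest route is to argue that any element of $N$ lies in $G_3$'' asks you to prove the hypothesis.) More importantly, you never invoke Lemma \ref{com1}, which is the only step that converts $x*n=1$ into $[x,n]=1$; knowing merely that $n$ and $n^x$ both lie in the abelian group $G_3$ gives $[n,n^x]=1$, not $[n,x]=1$.

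The replacement argument you propose in the last paragraph has an unjustified key claim. You assert that $[G,N]$ is ``simultaneously fixed and inverted by $V$,'' forcing $[G,N]\leq C_N(V)=1$. But every element of $N\leq G_3$ is fixed by $v_3$ and inverted by $v_1$ and $v_2$ (Lemma \ref{112}), and this is perfectly consistent: no nontrivial element of $N$ is fixed by \emph{all} of $V$, so $C_N(V)=1$ holds without any constraint on $[G,N]$. Checking directly, for $g\in G_1$ and $n\in N$ one gets $[g,n]^{v_1}=[g,n^{-1}]=[g,n]^{-1}$, which agrees with $v_1$ inverting $G_3$ and yields no contradiction. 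So the coprime-module route, as stated, proves nothing; the lemma really does need the $*$-operation machinery (Lemmas \ref{113} and \ref{com1}) or some equivalent substitute.
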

\begin{proof} Choose $b\in N$ and $a\in G_1$ (or $a\in G_2$). Since $b^a\in G_3$, it follows  that $a*b=1$ and so by Lemma \ref{com1} $[a,b]=1$. Thus an arbitrary element of $N$ commutes with an arbitrary element of $G_1\cup G_2$. Using that $G_3$ is abelian and $N\leq G_3$ we conclude that $N\leq Z(G)$.
\end{proof}

Before embarking on the proof of Theorem \ref{main3} we quote the main result of \cite{khushu} that plays a crucial role in subsequent arguments.

\begin{theorem}\label{q2}
Let $q$ be a prime, $e$ a positive integer and $A$ an elementary abelian group of order $q^{2}$. Suppose that $A$ acts as a coprime group of automorphisms on a finite group $G$ and assume that $C_{G}(a)$ has exponent  dividing $e$ for each $a\in A\setminus\{1\}$. Then the exponent of $G$ is $\{e,q\}$-bounded.
\end{theorem}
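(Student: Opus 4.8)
The plan is to combine the reduction to powerful $p$-groups, carried out via Lemma \ref{4.9}, with Zelmanov's solution of the restricted Burnside problem \cite{ze1,ze2}, exactly along the lines announced at the end of Section 3. First I would reduce to the case of a finite $p$-group. By coprimeness $G$ has, for each prime $p$, an $A$-invariant Sylow $p$-subgroup $P$, and $\exp(G)$ is the least common multiple of the numbers $\exp(P)$. Every such $P$ again satisfies the hypotheses, with $C_P(a)=C_G(a)\cap P$ of exponent dividing the $p$-part of $e$. If $p\nmid e$, then $C_P(a)=1$ for every $a\ne1$; since for an elementary abelian group of rank $2$ acting coprimely one has $P=\langle C_P(a_1),\dots,C_P(a_{q+1})\rangle$, where the $\langle a_i\rangle$ run through the subgroups of order $q$, this forces $P=1$. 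Hence only the primes $p\mid e$ contribute, and for those $p\le e$; it therefore suffices to bound $\exp(P)$ for each of them in terms of $e$ and $q$. So I assume $G$ is a finite $p$-group with $p\mid e$, and write $e=p^{k}$ for its relevant part.

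Next I would bound the number of generators. For each $g\in G$ the subgroup $H=\langle g^{A}\rangle$ is $A$-invariant, is generated by the $\le q^{2}$ elements $g^{a}$, and satisfies $C_H(a)\le C_G(a)$; a bound on $\exp(H)$ valid for all such $H$ bounds the order of every $g$, hence $\exp(G)$. So I may assume $G=\langle g^{A}\rangle$ is $d$-generated with $d\le q^{2}$, and I record for later that, by the same generation theorem, $G=\langle C_G(a_1),\dots,C_G(a_{q+1})\rangle$ is generated by elements of order dividing $e$. Now I pass to the Lie algebra $L=L_p(G)$ over $\mathbb F_p$, generated by the $d$-dimensional space $D_1/D_2$ and carrying the induced action of $A$. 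Extending scalars to a field $\mathbb F$ of characteristic $p$ with a primitive $q$-th root of unity, the coprime action of the abelian group $A$ turns $\hat L=L\otimes_{\mathbb F_p}\mathbb F$ into a Lie algebra graded by the character group of $A$: one has $\hat L=\bigoplus_{\chi}\hat L_\chi$ with $[\hat L_\chi,\hat L_\psi]\subseteq\hat L_{\chi\psi}$ and $C_{\hat L}(a)=\bigoplus_{\chi(a)=1}\hat L_\chi$. Every nontrivial character is trivial on some $a\ne1$, while the trivial character gives $\hat L_1=C_{\hat L}(A)$; as $C_G(A)\le C_G(a)$, every homogeneous component $\hat L_\chi$ lies in $C_{\hat L}(a_\chi)$ for a suitable $a_\chi\ne1$. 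The point I would then establish is that every Jennings-homogeneous element of such a centralizer is ad-nilpotent of index at most $e$: the coprime fixed-point formula $C_{D_i/D_{i+1}}(a)=C_{D_i}(a)D_{i+1}/D_{i+1}$ lifts it to $C_G(a)$, where it has order dividing $e$, and Lazard's Lemma \ref{3.6} yields ad-nilpotency of index at most $e$.

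With the homogeneous pieces under control, I would feed the grading into Zelmanov's theory: a Lie algebra graded by a finite abelian group, generated by finitely many elements and such that every homogeneous element is ad-nilpotent of bounded index, satisfies a polynomial identity and is nilpotent of $(d,e,q)$-bounded class. Applying this to $L_p(G)$ shows that $L_p(G)$ is nilpotent of class bounded in terms of $e$ and $q$. Lemma \ref{4.9} then furnishes a powerful characteristic — hence $A$-invariant — subgroup $P_0\le G$ of $(e,q)$-bounded index. Being $A$-invariant, $P_0=\langle C_{P_0}(a_1),\dots,C_{P_0}(a_{q+1})\rangle$ is generated by elements of order dividing $e$; since $P_0$ is powerful, the power structure recorded after Lemma \ref{4.9} gives $P_0^{\,e}=1$, so $\exp(P_0)$ divides $e$. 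As $[G:P_0]$ is $(e,q)$-bounded and $P_0$ is normal, $\exp(G)$ divides $[G:P_0]\exp(P_0)$ and is $(e,q)$-bounded; reassembling the contributions of the primes $p\mid e$ finishes the proof.

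The main obstacle is precisely the graded Zelmanov step. The natural generators $\overline{g^{a}}$ of $L_p(G)$ are not known to be ad-nilpotent — that is essentially the conclusion we are after — so one cannot simply invoke the familiar restricted-Burnside reduction for groups of a priori bounded exponent. What must be checked with care is that ad-nilpotency of every homogeneous component, a property that does \emph{not} pass to arbitrary non-homogeneous linear combinations, is nonetheless enough to manufacture a polynomial identity and to run Zelmanov's machinery, and that the resulting class bound is genuinely independent of $|G|$ and depends only on $d\le q^{2}$ and $e$.
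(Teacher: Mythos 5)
First, a point of reference: the paper does not prove Theorem \ref{q2} at all --- it is quoted verbatim as the main result of \cite{khushu} --- so your proposal must be measured against the argument of that paper. Your skeleton does match its strategy (reduction to $p$-groups and to $G=\langle g^A\rangle$, the Jennings--Zassenhaus algebra, character-eigenspace decomposition after extending scalars, Lazard's lemma, a Zelmanov-type nilpotency theorem, Lemma \ref{4.9}, and the power structure of powerful $p$-groups), and all of your outer group-theoretic reductions are sound. But the two Lie-theoretic steps in the middle contain genuine gaps. The first is your claim that every Jennings-homogeneous element of $C_{\widehat L}(a_\chi)$ is ad-nilpotent of index at most $e$ ``by the coprime fixed-point formula and Lazard.'' Those two facts only show this for elements of $C_{K_i}(a)$ over $\mathbb F_p$, because such elements are images of actual elements of $C_G(a)\cap D_i$ (note $\mathbb F_p$-linear combinations are again such images, since addition in $K_i$ is induced by the group operation). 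For $q>2$ the extension $\mathbb F\supsetneq\mathbb F_p$ is proper, a homogeneous element of $\widehat L_\chi$ is an $\mathbb F$-combination $\sum_j\lambda_j\bar u_j$ with $\lambda_j\notin\mathbb F_p$, and ad-nilpotency does not pass to such combinations: the operators $\mathrm{ad}\,\bar u_j$ need not commute. Repairing this is real work: one writes the element as a sum of at most $[\mathbb F:\mathbb F_p]$ ad-nilpotent summands coming from a subgroup $\langle u_0,\dots,u_r\rangle\le C_G(a)$, which has bounded exponent and boundedly many generators, hence bounded order and class by the restricted Burnside problem; then a separate lemma shows that a bounded sum of ad-nilpotent elements lying in a common nilpotent subalgebra of bounded class is ad-nilpotent of bounded index (no longer $\le e$). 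Without this, the hypothesis of your next step is simply unverified.

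The second gap is the one you flag yourself: the ``graded Zelmanov theorem'' you invoke is not an off-the-shelf result, and conceding that it ``must be checked with care'' is conceding that the heart of the proof is missing. The statement is in fact true, but it has to be assembled from three separate theorems: (i) the identity component $\widehat L_1=C_{\widehat L}(A)$ consists of ad-nilpotent elements of bounded index, so it satisfies an Engel identity, which is a polynomial identity; (ii) by the Bahturin--Zaicev theorem (a finite soluble --- here abelian --- group of automorphisms whose order is invertible in the ground field, with PI fixed-point subalgebra) the whole of $\widehat L$ then satisfies a PI of bounded degree; (iii) only then does Zelmanov's theorem --- a Lie algebra generated by finitely many elements, satisfying a PI, in which every commutator in the generators is ad-nilpotent of bounded index, is nilpotent of bounded class --- apply, using the fact that commutators of homogeneous generators are again homogeneous and hence covered by the first gap's conclusion. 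This chain (not a single citation) is what \cite{khushu} actually carries out; presented as one asserted step, your proposal assumes precisely what the theorem's proof has to establish.
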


\begin{proof}[Proof of Theorem \ref{main3}.] Let $K=G'$. Put $G_i=C_G(v_i)$ and $K_i=K\cap G_i$ for $i\in\{1,2,3\}$. By Lemma \ref{114}, we have $G=G_1G_2G_3$ and $K = K_1K_2K_3$. By the hypothesis, the centralizers $G_1$ and $G_2$ have exponent $e$ and therefore the exponents of $K_1$ and $K_2$ divide $e$. In view of Theorem \ref{q2} it is sufficient to prove that $K_3$ has $e$-bounded exponent.

Suppose first that $G$ is a $p$-group. Lemma \ref{commu22} tells
us that $K_3$ is generated by elements of the form $a*b$ for $a\in
G_1$ and $b\in G_2$. Since $K_3$ is abelian, it is sufficient to
show that $a*b$ has $e$-bounded order for every $a\in G_1$ and
$b\in G_2$. Thus, choose $a\in G_1$ and $b\in G_2$ and without
loss of generality we can assume that $G=\langle a,b\rangle$. Let
$L=L_p(G)$. Combining Lemma \ref{3.6} and Lemma \ref{992}, we
deduce that $L$ is nilpotent of $e$-bounded class. Therefore by
Lemma \ref{4.9} $G$ has a powerful characteristic subgroup $H$ of
$e$-bounded index. Since $H$ is $V$-invariant, it is generated by
the centralizers $H\cap G_i$. Hence, we can choose generators
$g_1,\dots,g_d$ of $H$ such that $g_1,\dots,g_d\in G_1\cup G_2\cup
G_3$. Since ${G_1}^e={G_2}^e=1$ and $H^e=\langle
{g_1}^e,\dots,{g_d}^e\rangle$, we conclude that $H^e\leq G_3$.
Thus, Lemma \ref{normg3} tells us that  $H^e\leq Z(G)$ and so
$H/(Z(G)\cap H)$ has $e$-bounded exponent. Therefore, according to
the solution of the restricted Burnside problem, $G/Z(G)$ has
$e$-bounded order and using the Schur Theorem \cite[p. 102]{rob}
we conclude that $G'$ has $e$-bounded order. In particular the
order of $a*b$ is $e$-bounded. Thus, in the case where $G$ is a
$p$-group the theorem is valid. This extends immediately to the
case where $G$ is nilpotent.

In general a finite group admitting a fixed-point-free four-group of automorphisms need not be nilpotent. However it is known that the derived group of such a group is nilpotent \cite[Theorem 10.5.3]{gor}. Therefore we deduce now that $G''$ has $e$-bounded exponent. Passing to the quotient $G/G''$, we can assume that $G$ is metabelian. Because it is sufficient to bound the exponent of $K_3$, we can also assume that $G=\langle G_1,G_2\rangle$. Since $K$ is generated by the centralizers $K_i$ and ${G_1}^e={G_2}^e=1$, it follows that $K^e\leq G_3$. Thus, Lemma \ref{normg3} tells us that  $K^e\leq Z(G)$, and hence $K/(Z(G)\cap K)$ has $e$-bounded exponent. Since $G=\langle G_1,G_2\rangle$, it is  clear that the exponent of $G/K$ divides $e$ and so $G/Z(G)$ has $e$-bounded exponent. If $x,y\in G$, we see that $\langle x,y\rangle/Z(\langle x,y\rangle)$ has $e$-bounded order. Therefore by Schur's theorem $|\langle x,y\rangle'|$ is $e$-bounded, as well. In particular, the order of every commutator $[x,y]$ is $e$-bounded.  Since $G$ is metabelian, it follows that $K$ has $e$-bounded exponent. The proof is now complete.
\end{proof}

\section{Main results}

We will now prove Theorem \ref{main2} and Theorem \ref{main1}.

\begin{proof}[Proof of Theorem \ref{main2}] Recall that the group $A=V\langle\alpha\rangle$,
isomorphic with $D_8$, acts on a finite group $G$. We use the notation introduced in the previous parts of the paper and assume that ${G_1}^\alpha=G_2$ and $G_3$ is $\alpha$-invariant.

Let $H$ be any $A$-invariant subgroup of $G$ and $N$ the minimal $A$-invariant normal subgroup of $H$ containing $C_H(\alpha)$. Then $\alpha$ is fixed-point-free on the quotient $H/N$. It follows that $A$ induces an abelian group of automorphisms of $H/N$ and in particular, since $v_3\in A'$, it follows that $v_3$ must act trivially on $H/N$. The conclusion is that $H=NC_H(v_3)$ and a similar decomposition holds for all $A$-invariant subgroups of $G$ and all $A$-invariant quotients.

In view of Theorem \ref{main3} the exponent of $G'$ is bounded by some number that depends only on the exponents of $G_1$ and $G_2$. Since ${G_1}^\alpha=G_2$, it is sufficient to show that an arbitrary element $x_1$ of $G_1$ has $e$-bounded order. Put ${x_1}^\alpha=x_2$. The subgroup $\langle x_1,x_2\rangle$ is $A$-invariant so without loss of generality we can assume that
$G=\langle x_1,x_2\rangle$. We remark that $v_3$ has no fixed-points in $G/G'$ and recall the formula $H=NC_H(v_3)$. It follows that $G/G'$ is generated by subgroups of the form $(C_{G/G'}(\alpha))^v$ where $v$ ranges through $V$. Hence, $G/G'$ is generated by elements of order dividing $e$. Since the rank of $G/G'$ is at most two, we conclude that the order of $G/G'$ is $e$-bounded.

Suppose first that $G$ is $p$-group and let $L=L_p(G)$. The group
$A$ naturally acts on $L$ and we will show that $L$ satisfies all
the hypothesis of Proposition \ref{lie}. Indeed, the fact that
every homogeneous element contained in $L_\alpha$ is ad-nilpotent
in $L$ of index at most $e$ follows from Lazard's Lemma \ref{3.6},
while the existence of an $e$-bounded number $n$ such that every
pair of homogeneous elements in $L_\alpha$ generates a subalgebra
that is nilpotent of class at most $n$ is immediate from
Zelmanov's solution of the restricted Burnside problem
\cite{ze1,ze2}. Set $L_i=C_{L}(v_{i})$ for each $i\in\{1,2,3\}$.
Since every $L_{i}$ admits a fixed-point-free involutory
automorphism, it follows that every $L_{i}$ is abelian. Thus,
Proposition \ref{lie} tells us that $L$ is nilpotent with
$e$-bounded class. According to Lemma \ref{4.9} $G$ has a powerful
characteristic subgroup $H$ of $e$-bounded index. Using the
formula $H=NC_H(v_3)$, we can choose generators $g_1,\dots,g_d$ of
$H$ each of which belongs either to a conjugate of $C_H(\alpha)$
or to $C_{H}(v_{3})$. Taking into account that $C_H(\alpha)$ has
exponent $e$ and that $H^e=\langle{g_1}^e,\dots,{g_d}^e\rangle$, we derive that $H^e\leq G_3$. It follows now from Lemma \ref{normg3} that $H/(Z(G) \cap H)$ has $e$-bounded exponent, and hence $G/Z(G)$ has $e$-bounded order. Using the
Schur Theorem we conclude that $G'$ has $e$-bounded order. We know
that the order of $G/G'$ is $e$-bounded, as well. So $G=\langle
x_{1},x_{2}\rangle$ has $e$-bounded order, and hence $x_1$ has
$e$-bounded order, too. Since $x_{1}$ was chosen in $G_{1}$
arbitrarily, we conclude that $G_{1}$ has bounded exponent and
hence the same for $G_{2}$.  This proves the theorem in the case
where $G$ is a $p$-group. Of course, from this the case where $G$
is nilpotent is straightforward.

Let us now deal with the case where $G$ is not necessarily nilpotent. We know from \cite[10.5.3]{gor} that $G'$ is nilpotent. Thus, we deduce from the previous paragraph that $G''$ has $e$-bounded exponent. Passing to the quotient $G/G''$, we can assume that $G$ is metabelian. Let
$M=G'$. Since $M=M_1M_2M_3$ where $M_i=M\cap G_{i}$, we see that $M$ is generated by  $M_3$ and some elements of order $e$. Therefore $M^e\leq G_3$. According to Lemma \ref{normg3} it follows that $M^e\leq Z(G)$. As in the proof of Theorem \ref{main3} we conclude that $G'$ has $e$-bounded order. Combining this with the fact that the order of $G/G'$ is likewise
$e$-bounded, the theorem follows. \end{proof}

\begin{proof}[Proof of Theorem \ref{main1}] Since the group $V\langle\alpha\rangle$ is isomorphic with $D_8$, we can use all the information that we obtained in the
proof of Theorem \ref{main2}. It follows that the exponent of $G_1$ is $e$-bounded. Of course in the group $S_4$ all involutions contained in $V$ are conjugate so we conclude that $G_1$, $G_2$
and $G_3$ have the same exponent. Now Theorem \ref{q2} tells us that the exponent of $G$ is $e$-bounded, as required. \end{proof}

\end{document}